\newtheorem{theorem}{Theorem}[section]
\newtheorem{corollary}[theorem]{Corollary}
\newtheorem{lemma}[theorem]{Lemma}
\newtheorem{proposition}[theorem]{Proposition}
\theoremstyle{definition}
\newtheorem{remark}[theorem]{Remark}
\numberwithin{equation}{section}
\title[Some results on semiclassical spectral analysis]{Some results on semiclassical spectral analysis of   magnetic Schr\"odinger operators}
\author[Y. A. Kordyukov]{Yuri A. Kordyukov}
\address{Institute of Mathematics, Ufa Federal Research Centre, Russian Academy of Sciences, 112~Chernyshevsky str., 450008 Ufa, Russia} 
\email{{\tt yurikor@matem.anrb.ru}}
\subjclass[2010]{Primary 58J50; Secondary 58J37, 35P20}
\keywords{Magnetic Schr\"odinger operator, semiclassical spectral asymptotics, eigenfunction estimates}
\begin{document}
\begin{abstract}
In our recent papers, we studied semiclassical spectral problems for the Bochner-Schr\"odinger operator on a manifold of bounded geometry. We survey some results of these papers in the setting of the magnetic Schr\"odinger operator in the Euclidean space and describe some ideas of the proofs.   
\end{abstract}

\dedicatory{Dedicated to Bernard Helffer on the occasion of his 75th birthday}

 \maketitle
\section{Preliminaries and main results}

In our recent papers \cite{higherLL,jst,UMN-trace,bochner-trace,essential}, we studied semiclassical spectral problems for the Bochner-Schr\"odinger operator on a manifold of bounded geometry. We survey some results of these papers in the setting of the magnetic Schr\"odinger operator in the Euclidean space and describe some ideas of the proofs.   

\subsection{The setting}
We consider a semiclassical Schr\"odinger operator in the Euclidean space $\mathbb R^d$ of the form 
\begin{equation}\label{e:def-Hh}
H_\hbar=\sum_{j=1}^{d}\left(\frac{\hbar}{i}\frac{\partial}{\partial x_j}-A_j(x)\right)^2 +\hbar V(x), \quad \hbar>0,
\end{equation}
where the components of the magnetic potential $A_j, j=1,\ldots,d$, and the electric potential $V$ are real-valued smooth functions in $\mathbb R^d$.

The magnetic field is given by the antisymmetric matrix $B=(B_{jk})$ given by
\[
B_{jk}=\frac{\partial A_k}{\partial x_j}-\frac{\partial A_j}{\partial x_k}, \quad j,k=1,\ldots,d.
\]
We assume that $B_{jk}$ and $V$ are uniformly bounded along with all derivatives of arbitrary order, $B_{jk}, V \in C^\infty_b(\mathbb R^d).$

The operator $H_\hbar$ is essentially self-adjoint in the Hilbert space $L^2(\mathbb R^d)$ with initial domain  $C^\infty_c(\mathbb R^d)$, and we still denote by $H_\hbar$ its unique self-adjoint extension.  

\begin{remark}
In \cite{higherLL,jst,UMN-trace,bochner-trace,essential}, the setting was following. Let $(X,g)$ be a complete Riemannian manifold of dimension $d$, $(L,h^L)$ a Hermitian line bundle on $X$ with a Hermitian connection $\nabla^L$ and $(E,h^E)$ a Hermitian vector bundle of rank $r$ on $X$ with a Hermitian connection $\nabla^E$. Suppose that $(X, g)$ is  a manifold of bounded geometry and $L$ and $E$ have bounded geometry. This means that the curvatures $R^{TX}$, $R^L$ and $R^E$ of the Levi-Civita connection $\nabla^{TX}$, connections $\nabla^L$ and $\nabla^E$, respectively, and their derivatives of any order are uniformly bounded on $X$ in the norm induced by $g$, $h^L$ and $h^E$, and the injectivity radius $r_X$ of $(X, g)$ is positive.

The magnetic field is the real-valued closed differential 2-form $\mathbf B$ given by 
\begin{equation}\label{e:def-omega}
\mathbf B=iR^L. 
\end{equation} 
where $R^L$ is the curvature of the connection $\nabla^L $ defined as $R^L=(\nabla^L)^2$. 

For any $p\in {\mathbb N}$, let $L^p:=L^{\otimes p}$ be the $p$th tensor power of $L$ and let
\[
\nabla^{L^p\otimes E}: {C}^\infty(X,L^p\otimes E)\to
{C}^\infty(X, T^*X \otimes L^p\otimes E)
\] 
be the Hermitian connection on $L^p\otimes E$ induced by $\nabla^{L}$ and $\nabla^E$. Consider the induced Bochner Laplacian $\Delta^{L^p\otimes E}$ acting on $C^\infty(X,L^p\otimes E)$ by
\[
\Delta^{L^p\otimes E}=\big(\nabla^{L^p\otimes E}\big)^{\!*}\,
\nabla^{L^p\otimes E},
\] 
where $\big(\nabla^{L^p\otimes E}\big)^{\!*}: {C}^\infty(X,T^*X\otimes L^p\otimes E)\to
{C}^\infty(X,L^p\otimes E)$ is the formal adjoint of  $\nabla^{L^p\otimes E}$. Let $V\in C^\infty(X,\operatorname{End}(E))$ be a self-adjoint endomorphism of $E$. We assume that $V$ and its derivatives of any order are uniformly bounded on $X$ in the norm induced by $g$ and $h^E$. 
The main object is the Bochner-Schr\"odinger operator $H_p$ acting on $C^\infty(X,L^p\otimes E)$ by
\[
H_{p}=\frac 1p\Delta^{L^p\otimes E}+V. 
\] 
 
The current setting described above is related with this general one as follows.
The Riemannian manifold $(X,g)$ is the Euclidean space $\mathbb R^d$ equipped with the standard flat metric. The Hermitian line bundle $(L,h^L)$ is trivial, and $(E,h^E)$ is the trivial Hermitian line bundle with the trivial connection $\nabla^E$. The Hermitian connection $\nabla^L$ is given by 
$$
\nabla^L=d-i \mathbf A, 
$$ 
where $\mathbf A$ is the real-valued differential 1-form on $\mathbb R^d$ given by
\[
\mathbf A=\sum_{j=1}^d A_j(x)dx_j.
\]
Then the magnetic field $\mathbf B$ coincides with the de Rham differential of $\mathbf A$:
\[
\mathbf B=d\mathbf A=\sum_{j<k} B_{jk}(x)dx_j\wedge dx_k. 
\]
The operator $H_p$ is related with the semiclassical magnetic Schr\"odinger operator $H_\hbar$ given by \eqref{e:def-Hh} as follows:
\[
H_p=\hbar^{-1}H_\hbar, \quad \hbar=\frac{1}{p}. 
\]
In the general setting, the parameter $p$ is a natural number, that gives rise to a discrete set of values of $\hbar$. When the line bundle $L$ is trivial, $p$ can be considered to be an arbitrary non-negative real number. 
\end{remark}

\subsection{Rough asymptotic description of the spectrum}
We will need some particular second order differential operators  (the model operators) associated with an arbitrary point $x_0\in {\mathbb R}^{d}$, introduced by Demailly \cite{Demailly85,Demailly91}. They are obtained from $H_{\hbar}$ by freezing its coefficients at $x_0$.

Let $x_0\in \mathbb R^d$.  
Consider the magnetic potential 
\begin{equation}\label{e:Ajx0}
A_{j,x_0}(Z)=\frac{1}{2}\sum_{k=1}^dB_{k j}(x_0)\,Z_k, \quad Z\in \mathbb R^d, \quad j=1,\ldots,d, 
\end{equation}
with constant magnetic field $B_{x_0}=(B_{jk}(x_0))_{j,k=1,\ldots,d}$.

The model operator $\mathcal H^{(x_0)}_\hbar$ is the semiclassical Schr\"odinger operator in $\mathbb R^d$ given by
\[
\mathcal H^{(x_0)}_\hbar=\sum_{j=1}^{d}\left(\frac{\hbar}{i}\frac{\partial}{\partial Z_j}-A_{j,x_0}(Z)\right)^2 +\hbar V(x_0),
\]
Here it is natural to consider $Z$ as a tangent vector to $\mathbb R^d$ at $x_0$, so the model operator is a second order differential operator on the tangent space $T_{x_0}\mathbb R^d\cong \mathbb R^d$. 
By simple rescaling, this operator is unitarily equivalent to the operator $\hbar\mathcal H^{(x_0)}$, where
\begin{equation}\label{e:DeltaL0p}
\mathcal H^{(x_0)}=\mathcal H^{(x_0)}_1=\sum_{j=1}^{d}\left(\frac{1}{i}\frac{\partial}{\partial Z_j}-A_{j,x_0}(Z)\right)^2 +V(x_0).
\end{equation}

Suppose that the rank of $B_{x_0}$ equals $2n=2n_{x_0}$. Its non-zero eigenvalues have the form $\pm i a_j({x_0}), j=1,\ldots,n,$ with $a_j({x_0})>0$ and, if $d>2n$, zero is an eigenvalue of multiplicity $d-2n$.   

For $\mathbf k=(k_1,\cdots,k_n) \in \mathbb Z^n_+$, set
\begin{equation}\label{e:def-Lambda}
\Lambda_{\mathbf k}(x_0)=\sum_{j=1}^n(2k_j+1) a_j(x_0)+V(x_0).
\end{equation}

In the full-rank case $d=2n$, the spectrum of $\mathcal H^{(x_0)}$ is a countable set of eigenvalues of infinite multiplicity:
\[
\sigma(\mathcal H^{(x_0)})=\Sigma_{x_0}:=\left\{\Lambda_{\mathbf k}({x_0})\,:\, \mathbf k\in{\mathbb Z}_+^n\right\}. 
\]
If $d>2n$, the spectrum of $\mathcal H^{(x_0)}$ is a semiaxis:
\[
\sigma(\mathcal H^{(x_0)})=[\Lambda_0(x_0), +\infty),
\]
where 
\[
\Lambda_0(x_0):=\sum_{j=1}^n a_j(x_0)+V(x_0). 
\]

\begin{theorem}[\cite{higherLL}]\label{t:spectrum}
Assume that the magnetic field $B$ is of full rank at each point $x_0\in {\mathbb R}^{2n}$. Then, for any $K>0$, there exist $c>0$ and $\hbar_0>0$ such that, for any $\hbar\in (0,\hbar_0]$, the spectrum of $H_\hbar$ in the interval  $[0,K\hbar]$  is  contained in the $c\hbar^{5/4}$-neighborhood of $\hbar\Sigma$, where 
$\Sigma$ be the union of the spectra of the model operators: 
\begin{equation}\label{e:def-Sigma}
\Sigma=\bigcup_{x_0\in {\mathbb R}^{2n}}\Sigma_{x_0}=\left\{\Lambda_\mathbf {k}(x_0)\,:\, \mathbf k\in{\mathbb Z}_+^n, x_0\in {\mathbb R}^{2n} \right\}.
\end{equation}  
\end{theorem}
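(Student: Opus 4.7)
My plan is to establish the equivalent contrapositive as a resolvent bound: for $\lambda \in [0, K\hbar]$ with $\mathrm{dist}(\lambda, \hbar\Sigma) \ge 2c\hbar^{5/4}$ I will show $\|(H_\hbar - \lambda)u\| \ge c'\hbar^{5/4}\|u\|$ for every $u \in \mathrm{Dom}(H_\hbar)$, hence $\lambda \in \mathrm{res}(H_\hbar)$. The heart of the argument is to compare $H_\hbar$ locally, at a carefully chosen length scale $\rho$, with the frozen-coefficient model operator $\mathcal H^{(x_0)}_\hbar$, whose spectrum is known explicitly from \eqref{e:def-Sigma}.

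First I would perform, at each $x_0 \in \mathbb R^{2n}$, a local gauge transformation $u \mapsto e^{-i\phi_{x_0}(x)/\hbar}u$ with an appropriate quadratic phase (the Poincar\'e gauge), arranging $A(x_0) = 0$ and killing the symmetric part of $\nabla A(x_0)$; Taylor's formula then yields $A_j(x) - A_{j,x_0}(x - x_0) = O(|x - x_0|^2)$ uniformly in $x_0$, thanks to $B_{jk}\in C^\infty_b$. Next I fix a quadratic partition of unity $\sum_j \chi_j^2 \equiv 1$ with each $\chi_j \in C^\infty_c(B(x_j, \rho))$ at scale $\rho = \hbar^{1/4}$, with bounded overlap and $|\partial^\alpha \chi_j|\le C_\alpha\hbar^{-|\alpha|/4}$. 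The Taylor estimates and $V\in C^\infty_b$ then give, for every $v$ supported in $B(x_j,\rho)$,
\[
\|(H_\hbar - \mathcal H^{(x_j)}_\hbar) v\| \le C\rho^2 \|Pv\| + C(\hbar\rho + \rho^3)\|v\|,
\]
where $P_k = (\hbar/i)\partial_k - A_k$ and the constants are uniform in $x_j$.

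To assemble the global bound I use $\|u\|^2 = \sum_j \|\chi_j u\|^2$, the identity $(H_\hbar - \lambda)\chi_j u = \chi_j (H_\hbar - \lambda) u + [H_\hbar, \chi_j] u$, and the IMS-type commutator estimate $\sum_j \|[H_\hbar, \chi_j] u\|^2 \le C(\hbar/\rho)^2 \|Pu\|^2 + C(\hbar^2/\rho^2)^2\|u\|^2$ from bounded overlap. The energy estimate $\|Pu\|^2 \le C\|(H_\hbar - \lambda)u\|\cdot \|u\| + C\hbar\|u\|^2$ closes this loop. Since $\sigma(\mathcal H^{(x_j)}_\hbar) = \hbar\Sigma_{x_j}\subset \hbar\Sigma$, the spectral theorem yields $\|(\mathcal H^{(x_j)}_\hbar - \lambda)\chi_j u\|\ge 2c\hbar^{5/4}\|\chi_j u\|$ for each $j$; squaring, summing over $j$, and absorbing the three error sources at the critical scale $\rho = \hbar^{1/4}$ yields the required coercivity $\|(H_\hbar - \lambda) u\| \ge c'\hbar^{5/4}\|u\|$.

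\emph{Main obstacle.} The critical scale $\rho = \hbar^{1/4}$ is pinned by the need to simultaneously balance the IMS commutator contribution $(\hbar/\rho)\cdot\|Pu\|$ and the frozen-coefficient Taylor remainder $\rho^2\cdot\|Pv\|$ against the target spectral gap $\hbar^{5/4}$, given $\|Pu\| = O(\hbar^{1/2})$ in the spectral window $[0,K\hbar]$. The hard part is to verify that every error term accumulated from these two sources — in particular the subleading remainders $\rho^3\|v\|$ in the magnetic Taylor expansion and the cross terms produced when $[P,\chi_j]$ acts on $u$ — aggregates strictly below $\hbar^{5/4}\|u\|$ after bounded-overlap summation. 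A secondary but important subtlety is that $\Sigma$ in \eqref{e:def-Sigma} is the union over all $x_0 \in \mathbb R^{2n}$, whereas the partition exploits only the discrete centers $x_j$; however, the Lipschitz dependence of the eigenvalues $a_k(x)$ of $B_x$ and of $V(x)$ on $x$ ensures $\mathrm{dist}_H\!\left(\hbar\Sigma,\ \hbar\bigcup_j \Sigma_{x_j}\right) = O(\hbar\rho) = O(\hbar^{5/4})$, consistent with the claimed bound.
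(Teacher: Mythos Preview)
Your overall strategy---a quadratic partition of unity at a suitable scale, local comparison with the frozen-coefficient model, and operator-level IMS-type localization---is exactly the approach the paper uses (via Proposition~\ref{p:estimate}, following Helffer--Mohamed). However, your balancing of the error terms contains a genuine gap.

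With the Poincar\'e gauge in the original Euclidean coordinates you have $A(x)-A_{j,x_0}(x-x_0)=O(|x-x_0|^2)$, so on a ball of radius $\rho$ the dominant piece of $H_\hbar-\mathcal H^{(x_j)}_\hbar$ is $a\cdot P$ with $|a|=O(\rho^2)$. Using only $\|Pv\|=O(\hbar^{1/2})\|v\|$ on the spectral window, this gives a Taylor error $\rho^2\hbar^{1/2}$. At your chosen scale $\rho=\hbar^{1/4}$ this equals $\hbar^{1/2}\cdot\hbar^{1/2}=\hbar$, \emph{not} $\hbar^{5/4}$; the error swamps the target gap. In fact, no scale saves this gauge: balancing $\rho^2\hbar^{1/2}$ against the IMS error $\hbar^{3/2}/\rho$ gives optimal $\rho=\hbar^{1/3}$ with combined error $\hbar^{7/6}$, still strictly larger than $\hbar^{5/4}$ and therefore insufficient for the theorem as stated.

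The paper closes this gap by passing to \emph{local Darboux coordinates} for the symplectic form $\mathbf B$ before localizing. In such coordinates the magnetic field is exactly constant, so (in the induced Poincar\'e gauge) the potential is exactly linear and there is no $O(\rho^2)\cdot P$ remainder at all. The price is that the Euclidean Laplacian becomes a Laplace--Beltrami operator for a metric $g$ with $g^{jk}-\delta^{jk}=O(\rho)$; the comparison error is now $O(\rho)\cdot P_jP_k$. Since $\|P_jP_k v\|$ is controlled by $\|\mathcal H^{(x_0)}_\hbar v\|+O(\hbar)\|v\|=O(\hbar)\|v\|$ in the spectral window, this error is $\rho\hbar$, which at $\rho=\hbar^{1/4}$ is exactly $\hbar^{5/4}$ and balances the IMS commutator. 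That is why the paper singles out ``a special gauge given by local Darboux coordinates'' rather than the naive Poincar\'e gauge you use.
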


In \cite{charles21}, L. Charles proved a better asymptotic estimate for the spectrum of the Bochner Laplacian on a compact manifold than in \cite{higherLL}. It is quite possible that the technique developed in \cite{charles21} allows us to prove an optimal estimate of order $\hbar^{3/2}$ instead of $\hbar^{5/4}$ as in Theorem~\ref{t:spectrum} in the current setting. 

When the magnetic field is non-degenerate and has discrete wells, an asymptotic description of the spectrum of $H_\hbar$ in an interval of the form $[0,K\hbar]$ in terms of the spectrum of an effective pseudodifferential operator is given in dimension two by Helffer-Kordyukov \cite{HK15} and Raymond-V\~{u} Ng\d{o}c \cite{RV} (see also a survey paper \cite{HK14}) and in higher dimensions by Morin \cite{M}. For constant rank magnetic fields with discrete wells, similar results are obtained by Helffer-Kordyukov-Raymond-V\~{u} Ng\d{o}c \cite{HKRV16} in dimension 3 and by Morin \cite{M24} in higher dimensions.

\subsection{Functions of the Schr\"odinger operator}
For any $\varphi\in \mathcal S(\mathbb R)$, the linear bounded operator $\varphi(H_{\hbar}/\hbar)$ in $L^2(\mathbb R^d)$ is defined by the spectral theorem. It is a smoothing operator with smooth Schwartz kernel $K_{\varphi(H_{\hbar}/\hbar)}\in C^\infty(\mathbb R^d\times \mathbb R^d)$:
\[
\varphi(H_{\hbar}/\hbar)u(x) =\int_{\mathbb R^d} K_{\varphi(H_{\hbar}/\hbar)}(x,x^\prime)u(x^\prime)dx^\prime, \quad u\in L^2(\mathbb R^d). 
\]
Using the finite propagation speed property of solutions of hyperbolic equations, one can show that, for any $\varepsilon >0$ and for any multi-indices $\alpha,\alpha^\prime\in \mathbb Z^d_+$,
\begin{equation}\label{e:off-diag}
\left|\partial^{\alpha}_x\partial^{\alpha^\prime}_{x^\prime} K_{\varphi(H_{\hbar}/\hbar)}(x,x^\prime)\right|=\mathcal O(\hbar^{\infty}), \quad |x-x^\prime|>\varepsilon.
\end{equation}

The following theorem states a complete asymptotic expansion for the function $K_{\varphi(H_{\hbar}/\hbar)}$ as $\hbar\to 0$ in a fixed neighborhood of the diagonal (independent of $\hbar$).

\begin{theorem}[\cite{bochner-trace}]\label{t:main}
There are a family of smooth real-valued functions $\Phi^{(x_0)}\in C^\infty(\mathbb R^d)$ parameterized by $x_0\in \mathbb R^d$ and a sequence of smooth functions $F_{r,x_0}(Z, Z^\prime)$, $r\geq 0$, of variables $x_0\in \mathbb R^d$ and $Z,Z^\prime\in T_{x_0}\mathbb R^d\cong \mathbb R^d$, such that the following asymptotic expansion holds true uniformly on $x_0\in \mathbb R^d$:
\begin{multline}\label{e:main}
\hbar^{\frac d2} K_{\varphi(H_{\hbar}/\hbar)}(x_0+Z, x_0+Z^\prime)\\ \cong e^{i(\Phi^{(x_0)}(x_0+Z)-\Phi^{(x_0)}(x_0+Z^\prime))/\hbar} \sum_{r=0}^\infty F_{r,x_0}(\hbar^{-\frac 12} Z, \hbar^{-\frac 12}Z^\prime)\hbar^{\frac{r}{2}}, \quad \hbar\to 0.
\end{multline}
More precisely, for any $j\in \mathbb Z_+$, the function
\begin{multline*}
R_{j,\hbar,x_0}(Z,Z^\prime)=\hbar^{\frac d2} K_{\varphi(H_{\hbar}/\hbar)}(x_0+Z, x_0+Z^\prime)\\ -e^{i(\Phi^{(x_0)}(x_0+Z)-\Phi^{(x_0)}(x_0+Z^\prime))/\hbar} \sum_{r=0}^jF_{r,x_0}(\hbar^{-\frac 12} Z, \hbar^{-\frac 12}Z^\prime)\hbar^{\frac{r}{2}}
\end{multline*}
satisfies the following condition. For any $m\in \mathbb Z_+$, there exists $M\in {\mathbb N}$ such that, for any $N\in {\mathbb N}$, there exists $C>0$ such that for any $\alpha, \alpha^\prime \in \mathbb Z_+^d$ with $|\alpha|+|\alpha^\prime|\leq m$, we have
\begin{multline*}
\left|\frac{\partial^{|\alpha|+|\alpha^\prime|}R_{j,\hbar,x_0}}{\partial Z^\alpha\partial Z^{\prime\alpha^\prime}}(Z,Z^\prime)\right| \\
\leq C\hbar^{-\frac{j-m+1}{2}}(1+\hbar^{-\frac 12}|Z|+\hbar^{-\frac 12}|Z^\prime|)^{M} (1+\hbar^{-\frac 12}|Z-Z^\prime|)^{-N}, \\
\hbar\geq 0,\quad x_0\in \mathbb R^d,\quad Z,Z^\prime\in \mathbb R^d.
\end{multline*}
\end{theorem}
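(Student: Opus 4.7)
The plan is to prove this by a combination of finite-propagation-speed localization, a gauge change, semiclassical rescaling to a model operator with the natural magnetic length $\sqrt{\hbar}$, and a Helffer--Sj\"ostrand expansion of the functional calculus around the model at $x_0$.

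First I would use \eqref{e:off-diag} to reduce the statement to $|Z|,|Z'|<\varepsilon$ for a fixed small $\varepsilon>0$: outside any such neighborhood of the diagonal the kernel is $\mathcal O(\hbar^\infty)$ in every $C^m$ norm, uniformly in $x_0$, and this is already absorbed in the target bound. Next, I would construct the gauge function $\Phi^{(x_0)}$ by Poincar\'e's lemma, setting
\[
\Phi^{(x_0)}(x_0+Z)=\int_0^1 \sum_{j=1}^d \bigl(A_j(x_0+tZ)-A_{j,x_0}(tZ)\bigr)Z_j\,dt,
\]
so that $\partial_{Z_j}\Phi^{(x_0)}(x_0+Z)=A_j(x_0+Z)-A_{j,x_0}(Z)+\mathcal O(|Z|^2)$ with bounds uniform in $x_0$. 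Conjugating $H_\hbar$ by $e^{i\Phi^{(x_0)}/\hbar}$ replaces the magnetic potential $A$ by $A-d\Phi^{(x_0)}$, which agrees with the model potential $A_{x_0}$ to first order at $x_0$.

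Next I would perform the magnetic rescaling $Z=\hbar^{1/2}u$, $Z'=\hbar^{1/2}u'$. In the $u$-variables, $\hbar^{-1}$ times the conjugated operator becomes a formal power series in $\hbar^{1/2}$,
\[
\mathcal H^{(x_0)}+\hbar^{1/2}\mathcal R_1^{(x_0)}(u,D_u)+\hbar\,\mathcal R_2^{(x_0)}(u,D_u)+\cdots ,
\]
where the $\mathcal R_j^{(x_0)}$ are differential operators with polynomial coefficients (coming from Taylor expansions of $B_{k\ell}$ and $V$ at $x_0$) and depend smoothly and uniformly on $x_0$. I would then apply the Helffer--Sj\"ostrand formula
\[
\varphi(T)=\frac{1}{2\pi i}\int_{\mathbb C}\bar\partial\tilde\varphi(z)\,(z-T)^{-1}\,dz\wedge d\bar z
\]
with $T=\hbar^{-1}H_\hbar$ in the conjugated and rescaled picture, and expand the resolvent as a Neumann series in the perturbation $\hbar^{1/2}\mathcal R_1^{(x_0)}+\hbar\,\mathcal R_2^{(x_0)}+\cdots$ about $(z-\mathcal H^{(x_0)})^{-1}$. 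Collecting powers of $\hbar^{1/2}$ and integrating against $\bar\partial\tilde\varphi(z)$ term by term produces smoothing operators whose kernels, evaluated at $(u,u')$, are precisely the $F_{r,x_0}(u,u')$; in particular $F_{0,x_0}(u,u')=K_{\varphi(\mathcal H^{(x_0)})}(u,u')$.

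The remaining analytic step, and the main obstacle, is to prove the weighted kernel estimate stated in the theorem with the polynomial growth $(1+|u|+|u'|)^M$ and the rapid decay $(1+|u-u'|)^{-N}$, uniformly in $x_0$. For the model term this follows from the Mehler-type representation of $\varphi(\mathcal H^{(x_0)})$ in the full-rank directions combined with standard semiclassical (pseudodifferential) estimates in the kernel directions of $B(x_0)$; for the perturbative terms it follows from iterating these estimates against the polynomial-coefficient operators $\mathcal R_j^{(x_0)}$, together with functional-calculus commutator estimates to control weights $u^\alpha$, $u'^{\alpha'}$, $(u-u')^\beta$. The uniformity in $x_0$ comes from the $C_b^\infty$ hypothesis on $B_{jk}$ and $V$, which makes all Taylor coefficients appearing in $\mathcal R_j^{(x_0)}$ uniformly bounded. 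Undoing the gauge transformation and the rescaling then yields \eqref{e:main} and the stated remainder bound on $R_{j,\hbar,x_0}$.
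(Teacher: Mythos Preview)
Your proposal is correct and follows the same overall strategy as the paper: transverse (Poincar\'e) gauge centered at $x_0$, rescaling by the magnetic length $\sqrt{\hbar}$, Helffer--Sj\"ostrand representation, and expansion around the model operator $\mathcal H^{(x_0)}$. The one organizational difference worth noting is in how the remainder is controlled. Rather than a Neumann expansion of the resolvent together with direct kernel estimates (Mehler formula in the full-rank directions, commutator tricks for the weights), the paper treats the rescaled operator $\mathcal H_t$ as a smooth family in $t=\sqrt{\hbar}$, proves uniform operator bounds for $\partial_t^r\varphi(\mathcal H_t)$ in a scale of power-weighted Sobolev norms $\|u\|_{t,m,M,N,W}=\sum_{|\alpha|\le M,|\beta|\le N}\|Z^\alpha(Z-W)^\beta u\|_{t,m}$ (via the resolvent-derivative identity \eqref{diff}), passes to pointwise kernel bounds by Sobolev embedding, and then obtains the expansion from Taylor's theorem with integral remainder. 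This packaging has two advantages over your sketch: it handles the polynomial-coefficient perturbations $\mathcal H^{(r)}$ uniformly within the weighted-norm framework (so no separate argument is needed to justify the Neumann iteration against unbounded perturbations), and it never appeals to a Mehler representation, so the argument goes through without any rank assumption on $B$.
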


Putting $Z=Z^\prime=0$, we get on-diagonal expansion for $K_{\varphi(H_{\hbar}/\hbar)}$. 

\begin{corollary}\label{c:main} 
For any $x_0\in \mathbb R^d$, there exists a sequence of distributions $f_r(x_0) \in \mathcal S^\prime(\mathbb R), r\geq 0$,  such that the following asymptotic expansion holds true as $\hbar \to 0$  uniformly on $x_0$:
\begin{equation}\label{e:cmain}
K_{\varphi(H_{\hbar}/\hbar)} (x_0,x_0)\sim \hbar^{-\frac d2}\sum_{r=0}^\infty \langle f_{r}(x_0), \varphi\rangle \hbar^{\frac{r}{2}}, \quad \langle f_{r}(x_0), \varphi\rangle=F_{r,x_0}(0,0).
\end{equation}
\end{corollary}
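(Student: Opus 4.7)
The proof is essentially a direct specialization of Theorem~\ref{t:main} to the diagonal, so the plan amounts to setting $Z=Z'=0$ and carefully interpreting the resulting $\varphi$-dependence. First, I would substitute $Z=Z'=0$ into \eqref{e:main}. The phase factor $e^{i(\Phi^{(x_0)}(x_0+Z)-\Phi^{(x_0)}(x_0+Z'))/\hbar}$ collapses to $1$ since its two arguments coincide, and each $F_{r,x_0}$ is evaluated at the origin in the rescaled variables. This yields
\[
\hbar^{d/2} K_{\varphi(H_\hbar/\hbar)}(x_0,x_0)\sim \sum_{r=0}^\infty F_{r,x_0}(0,0)\,\hbar^{r/2},
\]
uniformly in $x_0$, which becomes \eqref{e:cmain} after multiplying by $\hbar^{-d/2}$.

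To justify the asymptotic rigorously, I would apply the quantitative remainder bound of Theorem~\ref{t:main} with $m=0$, $\alpha=\alpha'=0$, and $Z=Z'=0$. Both polynomial factors $(1+\hbar^{-1/2}(|Z|+|Z'|))^{M}$ and $(1+\hbar^{-1/2}|Z-Z'|)^{-N}$ then reduce to $1$, so $|R_{j,\hbar,x_0}(0,0)|$ is controlled by the bare $\hbar$-power at each truncation $j$, uniformly in $x_0\in\mathbb R^d$. This immediately gives the asymptotic in the sense specified by \eqref{e:cmain}.

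The only point not immediately contained in Theorem~\ref{t:main} is the distributional character of $f_r(x_0)$, i.e.\ that $\varphi\mapsto F_{r,x_0}(0,0)$ is continuous on $\mathcal S(\mathbb R)$. Linearity in $\varphi$ is clear since $K_{\varphi(H_\hbar/\hbar)}$ depends linearly on $\varphi$ through the spectral theorem. For continuity, I would trace the construction of $F_{r,x_0}$ in \cite{bochner-trace}: each $F_{r,x_0}$ is produced as the Schwartz kernel of an operator obtained by applying $\varphi$ to the model operator $\mathcal H^{(x_0)}$ via functional calculus, together with local corrections from Taylor expansion of the full operator at $x_0$. Each such term depends continuously on $\varphi$ with respect to a finite number of Schwartz seminorms, uniformly in $x_0$, giving $f_r(x_0)\in\mathcal S'(\mathbb R)$. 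This continuity check is essentially the only step beyond pure substitution and is the mild obstacle I would expect to address with care.
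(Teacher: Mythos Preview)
Your proposal is correct and follows exactly the paper's approach: the paper simply states ``Putting $Z=Z'=0$, we get on-diagonal expansion for $K_{\varphi(H_{\hbar}/\hbar)}$'' immediately before stating the corollary, and you have spelled out precisely this substitution together with the collapse of the phase and the polynomial weights in the remainder estimate. Your additional paragraph verifying that $\varphi\mapsto F_{r,x_0}(0,0)$ defines a tempered distribution goes slightly beyond what the paper makes explicit, but is in the right spirit and is consistent with the structure of the coefficients described later in \eqref{e:Fr-phi} and \eqref{e:Fr}.
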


On a compact manifold, Corollary \ref{c:main} immediately implies an asymptotic expansion for the trace of the operator $\varphi(H_{\hbar}/\hbar)$ with $\varphi\in C^\infty_c(\mathbb R)$ of the form
\begin{equation}\label{e:trace}
\operatorname{tr} \varphi(H_{\hbar}/\hbar)\sim \hbar^{-\frac d2}\sum_{r=0}^{\infty}\langle f_{r}, \varphi\rangle \hbar^{\frac{r}{2}}, \quad \hbar \to 0.
\end{equation}
The formula \eqref{e:trace} is a particular case of the Gutzwiller trace formula for the semiclassical magnetic Schr\"odinger operator and the zero energy level of the classical Hamiltonian, which is critical in this case. We refer the interested reader to the survey \cite{UMN-trace} for more information. 

In the current setting of the Euclidean space $\mathbb R^d$, the spectrum of $H_\hbar$ is, generally, continuous, so the operator $\varphi(H_{\hbar}/\hbar)$ is not of trace class. Nevertheless, Theorem~\ref{t:ess-spectrum} below provides a sufficient condition for the spectrum of $H_\hbar$ to be discrete on some interval $(\hbar\alpha,\hbar\beta)$. Combined with a rough polynomial bound for the number of eigenvalues in $(\hbar\alpha,\hbar\beta)$ and the exponential localization result for the corresponding eigenfunctions given by Theorem~\ref{t:eigenest}, this allows us to show that the operator $\varphi(H_{\hbar}/\hbar)$ with $\varphi\in C^\infty_c(\mathbb R)$ supported in the interval $(\hbar\alpha,\hbar\beta)$ is of trace class for sufficiently small $\hbar>0$ and, using Theorem~\ref{t:main}, we can prove that its trace admits an asymptotic expansion as in \eqref{e:trace}. The details will be given elsewhere. 

Explicit formulas for the coefficients $F_{r,x_0}$ and $f_{r}(x_0)$ in the asymptotic expansions \eqref{e:main} and \eqref{e:cmain} involve the model operators introduced above. First of all, we have the following theorem for the leading coefficients. 

\begin{theorem}\label{t:leading-coefficient}
The leading coefficient $F_{0,x_0}$ is the Schwartz kernel $K_{\varphi(\mathcal H^{(x_0)})}$ of the corresponding function $\varphi(\mathcal H^{(x_0)})$ of the model operator $\mathcal H^{(x_0)}$:
\[
F_{0,x_0}(Z,Z^\prime)=K_{\varphi(\mathcal H^{(x_0)})}(Z,Z^\prime),  \quad Z, Z^\prime\in \mathbb R^d.
\]
As a consequence, we get 
\[
f_{0}(x_0)=K_{\varphi(\mathcal H^{(x_0)})}(0,0).
\]
\end{theorem}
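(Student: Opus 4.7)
The plan is to exploit the expansion of Theorem~\ref{t:main} by computing the $\hbar\to 0$ limit of a rescaled, gauge-transformed version of $\varphi(H_\hbar/\hbar)$ near $x_0$. For each $x_0\in\mathbb R^d$, I take $\Phi^{(x_0)}$ to be the phase function from Theorem~\ref{t:main}, which may be normalized so that the gauged $1$-form $\mathbf A-d\Phi^{(x_0)}$ agrees with the symmetric model potential $\sum_j A_{j,x_0}(x-x_0)\,dx_j$ modulo terms vanishing to second order at $x_0$; this is possible because both $1$-forms have the same differential $\mathbf B(x_0)$ at $x_0$. I then introduce the unitary operator
\[
(U_\hbar^{(x_0)}u)(Z)=\hbar^{d/4}\,e^{-i\Phi^{(x_0)}(x_0+\hbar^{1/2}Z)/\hbar}\,u(x_0+\hbar^{1/2}Z),
\]
which yields, by a direct kernel computation, the identity
\[
\hbar^{d/2}K_{\varphi(H_\hbar/\hbar)}(x_0+Z,x_0+Z^\prime)=e^{i(\Phi^{(x_0)}(x_0+Z)-\Phi^{(x_0)}(x_0+Z^\prime))/\hbar}\,K_{T_\hbar^{(x_0)}}(\hbar^{-1/2}Z,\hbar^{-1/2}Z^\prime),
\]
where $T_\hbar^{(x_0)}:=U_\hbar^{(x_0)}\varphi(H_\hbar/\hbar)(U_\hbar^{(x_0)})^{-1}$. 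Comparing with \eqref{e:main}, the problem reduces to showing that the leading-order kernel of $T_\hbar^{(x_0)}$ as $\hbar\to 0$ is $K_{\varphi(\mathcal H^{(x_0)})}$.

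Next I expand $\hbar^{-1}U_\hbar^{(x_0)}H_\hbar(U_\hbar^{(x_0)})^{-1}$. Each factor $(\hbar/i)\partial_{x_j}-A_j$ conjugates under $U_\hbar^{(x_0)}$ to $\hbar^{1/2}\bigl[(1/i)\partial_{Z_j}-(A_j-\partial_{x_j}\Phi^{(x_0)})(x_0+\hbar^{1/2}Z)\bigr]$. Taylor-expanding the gauged potential and $V(x_0+\hbar^{1/2}Z)$ about $x_0$, and using the normalization of $\Phi^{(x_0)}$ that identifies the linear part of the gauged potential with $A_{j,x_0}(Z)$, one obtains a formal asymptotic expansion
\[
\hbar^{-1}U_\hbar^{(x_0)}H_\hbar(U_\hbar^{(x_0)})^{-1}\sim \mathcal H^{(x_0)}+\sum_{k\geq 1}\hbar^{k/2}\mathcal H^{(x_0)}_k,
\]
where each $\mathcal H^{(x_0)}_k$ is a second-order differential operator in $Z$ with polynomial coefficients depending on higher derivatives of $\mathbf B$ and $V$ at $x_0$.

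To transfer this operator expansion to a kernel expansion for $T_\hbar^{(x_0)}$, I would invoke the Helffer--Sj\"ostrand formula
\[
\varphi(A)=-\frac{1}{\pi}\int_{\mathbb C}\bar\partial\tilde\varphi(z)\,(z-A)^{-1}\,dL(z)
\]
with an almost-analytic extension $\tilde\varphi$, applied to $A=\hbar^{-1}U_\hbar^{(x_0)}H_\hbar(U_\hbar^{(x_0)})^{-1}$, and expand the resolvent as a Neumann series around $(z-\mathcal H^{(x_0)})^{-1}$. Integrating term by term against $\bar\partial\tilde\varphi$ produces
\[
T_\hbar^{(x_0)}\sim\varphi(\mathcal H^{(x_0)})+\sum_{r\geq 1}\hbar^{r/2}R_r^{(x_0)}.
\]
Taking Schwartz kernels and matching with the expansion guaranteed by Theorem~\ref{t:main}, the uniqueness of asymptotic coefficients identifies $F_{0,x_0}(Z,Z^\prime)=K_{\varphi(\mathcal H^{(x_0)})}(Z,Z^\prime)$; the on-diagonal statement follows on setting $Z=Z^\prime=0$.

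The main obstacle is upgrading this formal expansion to the pointwise kernel estimates with controlled polynomial growth in $|Z|,|Z^\prime|$ required by Theorem~\ref{t:main}. Because the coefficients of the correction operators $\mathcal H^{(x_0)}_k$ grow polynomially in $Z$, one has to work in weighted Sobolev spaces adapted to the harmonic-oscillator scaling of $\mathcal H^{(x_0)}$, use its ellipticity and spectral-gap structure to bound iterated resolvents uniformly in $z$ on the support of $\bar\partial\tilde\varphi$, and exploit the Schwartz-class off-diagonal decay of $K_{\varphi(\mathcal H^{(x_0)})}$ to close the uniform bounds in $x_0$.
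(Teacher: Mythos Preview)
Your proposal is correct and follows essentially the same route as the paper: the Fock--Schwinger gauge $\Phi^{(x_0)}$, the unitary rescaling (your $U_\hbar^{(x_0)}$ is exactly the paper's $e^{-i\Phi^{(x_0)}/\hbar}S_t$), the Taylor expansion \eqref{e:Ht-formal} of the conjugated operator, and the Helffer--Sj\"ostrand formula to identify the leading kernel with $K_{\varphi(\mathcal H^{(x_0)})}$. The only cosmetic differences are that the paper phrases the resolvent expansion via $t$-derivatives (formula \eqref{diff}) rather than a Neumann series, and that the correction operators $\mathcal H^{(r)}$ for $r\geq 1$ are in fact first order (see \eqref{e:Hj}), not second order as you wrote; neither point affects your identification of $F_{0,x_0}$.
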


One can compute the Schwartz kernel $K_{\varphi(\mathcal H^{(x_0)})}$ and get an explicit formula for $f_{0}(x_0)$. In the full-rank case $d=2n$, we have
\[
f_{0}(x_0)=\frac{1}{(2\pi)^{n}} \left(\prod_{j=1}^n a_j(x_0)\right)  \sum_{\mathbf k\in{\mathbb Z}_+^n}\varphi(\Lambda_{\mathbf k}(x_0)), 
\]
and, for $d>2n$,
\[
f_{0}(x_0)=\frac{1}{(2\pi)^{d-n}} \left(\prod_{j=1}^n a_j(x_0)\right)  \sum_{\mathbf k\in{\mathbb Z}_+^n}\int_{{\mathbb R}^{d-2n}} \varphi(|\xi|^2+\Lambda_{\mathbf k}(x_0))d\xi.
\]

For the next coefficients  in the asymptotic expansion \eqref{e:main}, we get 
\begin{equation}\label{e:Fr}
F_{r,x_0}(Z,Z^\prime)=\sum_{|\mathbf k|\leq 3r} \sum_{\ell=1}^{N_r} D_{\mathbf k,\ell} K_{\varphi^{(\ell-1)}(\mathcal H^{(x_0)}-2\mathbf k\cdot a)}(Z,Z^\prime),  \quad Z, Z^\prime\in \mathbb R^d,
\end{equation}
where $D_{\mathbf k,\ell}$ is a differential operator with polynomial coefficients in $Z$ of order $3r$ and $N_r$ is a some natural number. 

Assume that the magnetic field $B$ is of full rank $d=2n$ at each point $x_0\in {\mathbb R}^d={\mathbb R}^{2n}$ and $\inf |B_{x_0}|\geq c>0$ for any $x_0\in {\mathbb R}^{2n}$. Consider the spectral projection $E_{[\hbar a,\hbar b]}$ of the operator $H_\hbar$ associated to an interval $[\hbar a,\hbar b]$ such that $a, b\not \in \Sigma$. By Theorem \ref{t:spectrum}, there exists $\mu_0>0$ and $\hbar_0>0$ such that for any $\hbar\in (0,\hbar_0)$ the extreme points $\hbar a$ and $\hbar b$ of the interval are in gaps of the spectrum of the operator $H_\hbar$. So, replacing $E_{[\hbar a,\hbar b]}$ with the operator $\varphi(H_\hbar/\hbar)$ for an appropriate function $\varphi\in C^\infty(\mathbb R^{2n})$, we  infer that the off-diagonal estimate \eqref{e:off-diag} and the asymptotic expansion of Theorem~\ref{t:main} hold true for the smooth Schwartz kernel $E_{[\hbar a,\hbar b]}(x,x^\prime)$, $x,x^\prime\in \mathbb R^{2n}$. 

In fact, better estimates can be proved \cite{higherLL}.   
First, one can show an exponential off-diagonal estimate for $E_{[\hbar a,\hbar b]}(x,x^\prime)$, which is the analog of \cite[Theorem 1]{ma-ma15} and \cite[Theorem 1.2]{ko-ma-ma} for the Bergman kernel. 
There exists $c>0$ such that for any $k\in \mathbb N$, there exists $C_k>0$ such that for any multi-indices $\alpha$ and $\alpha^\prime$ with $|\alpha|+|\alpha^\prime|\leq k$, for any $\hbar\in (0,\hbar_0)$, $x, x^\prime \in \mathbb R^{2n}$, we have
\[
\big| \partial^{\alpha}_x\partial^{\alpha^\prime}_{x^\prime} E_{[\hbar a,\hbar b]}(x, x^\prime)\big|\leq C_k \hbar^{-(n+\frac{k}{2})} e^{-c\,|x-x^\prime|/\hbar^{1/2}}.
\]
Second, one can prove a similar exponential estimate for the remainder $R_{j,\hbar,x_0}$ in the asymptotic expansion of Theorem~\ref{t:main} for $E_{[\hbar a,\hbar b]}(x,x^\prime)$ (see \cite{higherLL} for more details). We also mention that, on a compact manifold, an asymptotic expansion of $E_{[\hbar a,\hbar b]}(x,x^\prime)$ (without exponential estimates) was proved by Charles in \cite{charles21}.

For a compact manifold, the Toeplitz operator calculus associated with the spectral projection $E_{[\hbar a,\hbar b]}$ as above is developed in \cite{charles24,jst}. It is shown that this calculus provides a Berezin-Toeplitz quantization of the manifold equipped with the symplectic form $\mathbf B$. In \cite{charles24}, Charles also computes the dimension of the corresponding eigenspace as a Riemann-Roch number. The constructions of \cite{jst} are extended  in \cite{bg-guant} to manifolds of bounded geometry, in particular, to the current setting of the Euclidean space.

\subsection{Localization of the spectral projection and eigenfunctions}
As an immediate consequence of Corollary \ref{c:main} and the description of the coefficients of the asymptotic expansion given by \eqref{e:Fr}, we obtain the following result on the asymptotic  localization of the Schwartz kernel of the spectral projection on the diagonal in the case when the magnetic field is of full rank, which is an improvement of \cite[Theorem 1.3]{charles21}.

\begin{theorem}[\cite{bochner-trace}]\label{t:local}
Assume that, for some $x_0\in \mathbb R^d$, the rank of $B_{x_0}$ equals $d=2n$ and an interval $[a,b]$ does not contain any $\Lambda_{\mathbf k}(x_0)$ with  $\mathbf k \in \mathbb Z^n_+$. 

For any $\varphi\in \mathcal S({\mathbb R})$ such that ${\rm supp}\,\varphi \subset (a,b)$, 
\[
\left|K_{\varphi(H_{\hbar}/\hbar)}(x_0,x_0)\right|_{{C}^k}=\mathcal O(\hbar^{\infty}), \quad k=0,1,\ldots, \quad \hbar\to 0.
\]
Moreover, if an interval $[a,b]$ does not contain any $\Lambda_{\mathbf k}(x_0)$ with  $\mathbf k \in \mathbb Z^n_+$, then the Schwartz kernel of the spectral projection $E_{[\hbar a,\hbar b]}$ of the operator $H_\hbar$ associated to $[\hbar a,\hbar b]$ satisfies 
\[
\left|E_{[\hbar a,\hbar b]}(x_0,x_0)\right|=\mathcal O(\hbar^{\infty}),\quad \hbar\to 0.
\]
\end{theorem}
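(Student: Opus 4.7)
The plan is to deduce the first assertion from the on-diagonal expansion of Corollary \ref{c:main} combined with the explicit form \eqref{e:Fr} of its coefficients, and then obtain the second assertion via an operator-positivity argument.

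First I would invoke Theorem \ref{t:main} with $Z=Z'=0$, together with its companion estimate for $x_0$-derivatives. For every truncation order $j$ this yields
\[
K_{\varphi(H_\hbar/\hbar)}(x_0,x_0)=\hbar^{-d/2}\sum_{r=0}^{j}F_{r,x_0}(0,0)\,\hbar^{r/2}+\mathcal O(\hbar^{(j-d+1)/2}),
\]
uniformly in $x_0$, with analogous bounds after differentiating in $x_0$. Consequently it suffices to show that $F_{r,x_0}(0,0)$ and all its $x_0$-derivatives vanish for every $r\geq 0$.

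The vanishing is the heart of the matter. By \eqref{e:Fr}, $F_{r,x_0}(Z,Z')$ is a finite linear combination of expressions $D_{\mathbf k,\ell}K_{\psi_{\mathbf k,\ell,x_0}(\mathcal H^{(x_0)})}(Z,Z')$ with $\psi_{\mathbf k,\ell,x_0}(t):=\varphi^{(\ell-1)}(t-2\mathbf k\cdot a(x_0))$, where $a(x_0)=(a_1(x_0),\ldots,a_n(x_0))$. Because $\mathcal H^{(x_0)}$ has spectrum $\Sigma_{x_0}=\{\Lambda_{\mathbf m}(x_0):\mathbf m\in\mathbb Z^n_+\}$ of Landau-type eigenvalues, the spectral theorem expresses each such operator as a sum over $\mathbf m\in\mathbb Z^n_+$ of orthogonal projections onto the $\Lambda_{\mathbf m}(x_0)$-eigenspace with scalar weights $\varphi^{(\ell-1)}(\Lambda_{\mathbf m}(x_0)-2\mathbf k\cdot a(x_0))$. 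The key observation is that, after regrouping the double sum in \eqref{e:Fr} over $\mathbf k$ and $\mathbf m$, only scalars of the form $\varphi^{(j)}(\Lambda_{\mathbf m'}(x_0))$ with $\mathbf m'\in\mathbb Z^n_+$ and $j\geq 0$ ultimately survive on the diagonal $Z=Z'=0$. The assumptions $\operatorname{supp}\varphi\subset(a,b)$ and $[a,b]\cap\Sigma_{x_0}=\emptyset$ force $\varphi$ to be flat to all orders at every point of $\Sigma_{x_0}$, so every such scalar vanishes; since $V$ and the $a_j$ depend smoothly on $x_0$ and $\varphi$ remains flat at the smoothly deforming points $\Lambda_{\mathbf m'}(x_0)$, differentiating in $x_0$ does not alter the conclusion.

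For the second assertion I would argue by positivity. Since $\Sigma_{x_0}$ is discrete and disjoint from $[a,b]$, pick $\tilde\varphi\in C^\infty_c(\mathbb R)$ with $0\leq\tilde\varphi\leq 1$, $\tilde\varphi\equiv 1$ on $[a,b]$, and $\operatorname{supp}\tilde\varphi$ contained in a slightly enlarged interval still disjoint from $\Sigma_{x_0}$. The spectral theorem gives $\tilde\varphi(H_\hbar/\hbar)\geq E_{[\hbar a,\hbar b]}\geq 0$ as bounded self-adjoint operators; testing this order relation against approximate delta functions at $x_0$ yields $0\leq E_{[\hbar a,\hbar b]}(x_0,x_0)\leq K_{\tilde\varphi(H_\hbar/\hbar)}(x_0,x_0)$, and the first part of the theorem bounds the right-hand side by $\mathcal O(\hbar^\infty)$. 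The main obstacle is the combinatorial identity behind the key observation above: one must verify that the polynomial coefficients carried by the operators $D_{\mathbf k,\ell}$ interact with the explicit Hermite/Landau structure of the eigenprojections of $\mathcal H^{(x_0)}$ in precisely the way required to cancel any apparent contribution from arguments $\Lambda_{\mathbf m}(x_0)-2\mathbf k\cdot a(x_0)$ falling outside $\Sigma_{x_0}$. The authors' description of Theorem \ref{t:local} as an immediate consequence of Corollary \ref{c:main} and \eqref{e:Fr} suggests that this cancellation is intrinsic to the derivation of \eqref{e:Fr} itself.
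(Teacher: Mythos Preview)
Your outline follows the paper's route—the paper presents Theorem~\ref{t:local} as an immediate consequence of Corollary~\ref{c:main} together with the explicit description of the coefficients, so reducing everything to the vanishing of all $F_{r,x_0}(0,0)$ is exactly right, and your positivity sandwich $0\le E_{[\hbar a,\hbar b]}\le\tilde\varphi(H_\hbar/\hbar)$ for the second assertion is clean and valid.

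Where you make life harder than necessary is in the ``key observation.'' You try to read the vanishing off formula~\eqref{e:Fr} and then, correctly, flag an obstacle: the shifted arguments $\Lambda_{\mathbf m}(x_0)-2\mathbf k\cdot a(x_0)$ need not lie in $\Sigma_{x_0}$, so the individual summands of~\eqref{e:Fr} are not obviously zero and some cancellation mechanism has to be invoked. This detour is avoidable. Go one step earlier in the paper, to the Helffer--Sj\"ostrand representation~\eqref{e:Fr-phi}: since $\operatorname{supp}\varphi\subset(a,b)$ and $[a,b]\cap\Sigma_{x_0}=\emptyset$, one may choose the almost-analytic extension $\tilde\varphi$ with compact support in $\mathbb C$ disjoint from $\sigma(\mathcal H^{(x_0)})=\Sigma_{x_0}$. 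On that support every resolvent $(\lambda-\mathcal H^{(0)})^{-1}$ is holomorphic in $\lambda$, hence so is the full product of resolvents and $\mathcal H^{(r_i)}$'s appearing in~\eqref{e:Fr-phi}; the integrand is therefore $\partial_{\bar\lambda}\tilde\varphi$ times a holomorphic function of $\lambda$, and Stokes' theorem kills the integral. Thus $F_r=0$ as an operator, so $F_{r,x_0}(Z,Z')\equiv 0$ for all $Z,Z'$, and the same reasoning after differentiating~\eqref{e:Fr-phi} in $x_0$ handles the $C^k$ bounds. This is presumably what ``immediate'' means here, and it sidesteps entirely the combinatorial cancellation you were worried about.
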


By Theorem~\ref{t:local}, we conclude that, if, for some $x_0\in \mathbb R^{2n}$, an interval $[a,b]$ does not contains any $\Lambda_{\mathbf k}(x_0)$ with $\mathbf k \in \mathbb Z^n_+$, then, for any sequence $\{u_{\hbar}\in C^\infty(\mathbb R^{2n})\cap L^2({\mathbb R}^{2n}), \hbar>0\}$ of eigenfunctions of $H_\hbar$ with the corresponding eigenvalues $\lambda_{\hbar}$ in $[\hbar a,\hbar b]$, we have
\[
 |u_\hbar(x_0)|=\mathcal O(\hbar^{\infty}), \quad \hbar\to 0.
\]
In other words, the essential support of the sequence $\{u_\hbar, \hbar>0\}$ is contained in the closed subset $\mathcal K_{[a,b]}\subset \mathbb R^{2n}$ of all $x_0\in \mathbb R^d$ such that $\Lambda_{\mathbf k}(x_0)\in [a,b]$ for some $\mathbf k \in \mathbb Z^n_+$: 
\[
\mathcal K_{[a,b]}=\{x\in \mathbb R^{2n} \,:\,  \Sigma_x\cap [a,b]\neq\emptyset\}.
\]
This result complements the asymptotic description of the spectrum of $H_\hbar$ in terms of $\Lambda_{\mathbf k}(x_0)$ given in \cite{charles21,higherLL}.

Theorem~\ref{t:local} states that the spectral function $E_{[\hbar a,\hbar b]}(x_0,x_0)$ outside of $\mathcal K_{[a,b]}$ has rapid decay in $\hbar$ as $\hbar\to 0$. It gives no information on dependence of this decay on the distance from $x_0$ to $\mathcal K_{[a,b]}$. Such an information is obtained in \cite{essential} in the case when the magnetic field is of full rank and $\mathcal K_{[a,b]}$ is compact. Under this assumption, we show that this part of spectrum is discrete and the corresponding eigenfunctions exponentially localized away the compact set $\mathcal K_{[a,b]}$. In particular, each eigenfunction outside of $\mathcal K_{[a,b]}$ has pointwise exponential decay in $\hbar$  as $\hbar\to 0$.
 
\begin{theorem}[\cite{essential}]\label{t:ess-spectrum}
Assume that the magnetic field $B$ is of full rank at each point $x_0\in {\mathbb R}^{2n}$ and $\inf |B_{x_0}|\geq c>0$ for any $x_0\in {\mathbb R}^{2n}$. Moreover, assume that, for $[a,b]\subset \mathbb R$, the set $\mathcal K_{[a,b]}$ is compact. Then there exist $\epsilon>0$ and $\hbar_0>0$ such that for any $\hbar\in (0,\hbar_0]$ the spectrum of $H_{\hbar}$ in $[\hbar a+\epsilon \hbar^{5/4}, \hbar b-\epsilon \hbar^{5/4}]$ is discrete.  
\end{theorem}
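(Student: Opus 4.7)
The plan is to show that the essential spectrum of $H_\hbar$ is disjoint from the interval $I_\hbar := [\hbar a + \epsilon \hbar^{5/4}, \hbar b - \epsilon \hbar^{5/4}]$ for a suitably chosen $\epsilon > 0$ and all sufficiently small $\hbar$; then $\sigma(H_\hbar) \cap I_\hbar$ consists of isolated eigenvalues of finite multiplicity, i.e., it is discrete. The strategy combines the Weyl singular sequence characterization of $\sigma_{\mathrm{ess}}$, where the sequence can be arranged to have supports escaping to infinity in $\mathbb{R}^{2n}$, with the quantitative spectral localization underlying Theorem~\ref{t:spectrum}, applied locally to detect where the approximate eigenfunctions can possibly live.

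Fix $K_0 > b$ and let $c$ denote the constant in Theorem~\ref{t:spectrum} associated to $K_0$. I would take any $\epsilon > c$ and argue by contradiction: suppose there exist sequences $\hbar_j \to 0$ and $\lambda_{\hbar_j} \in \sigma_{\mathrm{ess}}(H_{\hbar_j}) \cap I_{\hbar_j}$. By the Weyl criterion together with the compact embedding of $H^1_{\mathrm{loc}}$ into $L^2_{\mathrm{loc}}$, for each such $\hbar_j$ there is a normalized sequence $\{u_n^{(j)}\}_n \subset \mathrm{Dom}(H_{\hbar_j})$ with $u_n^{(j)} \rightharpoonup 0$, $\|(H_{\hbar_j} - \lambda_{\hbar_j}) u_n^{(j)}\| \to 0$ as $n\to\infty$, and $\|u_n^{(j)}\|_{L^2(K)} \to 0$ for every compact $K \subset \mathbb{R}^{2n}$. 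I would then introduce an IMS partition of unity $\{\chi_\alpha\}$ subordinate to a cover of $\mathbb{R}^{2n}$ by balls of radius $\sim \hbar_j^{1/2}$ centered at a lattice $\{y_\alpha\}$, with $\sum_\alpha \chi_\alpha^2 = 1$, and use the identity
\[
H_{\hbar_j} = \sum_\alpha \chi_\alpha H_{\hbar_j} \chi_\alpha - \hbar_j^2 \sum_\alpha |\nabla \chi_\alpha|^2,
\]
in which the second-order error is uniformly $O(\hbar_j)$. On each piece, after the natural rescaling $Z = \hbar_j^{-1/2}(x - y_\alpha)$, $H_{\hbar_j}$ is close to the rescaled model operator $\hbar_j \mathcal{H}^{(y_\alpha)}$ up to an error that, together with the IMS commutator, produces exactly the $\hbar_j^{5/4}$ trapping of spectrum near $\hbar_j \Sigma_{y_\alpha}$ that drives Theorem~\ref{t:spectrum}.

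The upshot is that, writing $\mu_{\hbar_j} = \lambda_{\hbar_j}/\hbar_j \in [a + \epsilon \hbar_j^{1/4}, b - \epsilon \hbar_j^{1/4}]$, the contribution $\|\chi_\alpha u_n^{(j)}\|$ can be bounded by a small multiple of $\|(H_{\hbar_j} - \lambda_{\hbar_j}) u_n^{(j)}\| + o_{n}(1)$ whenever $\mathrm{dist}(\Sigma_{y_\alpha}, \mu_{\hbar_j}) > c\hbar_j^{1/4}$. Conversely, if $\|\chi_\alpha u_n^{(j)}\|$ is non-negligible, there must exist $\mathbf{k}$ with $|\Lambda_{\mathbf{k}}(y_\alpha) - \mu_{\hbar_j}| \leq c \hbar_j^{1/4}$, so $\Lambda_{\mathbf{k}}(y_\alpha) \in [a + (\epsilon - c)\hbar_j^{1/4}, b - (\epsilon - c)\hbar_j^{1/4}] \subset [a, b]$, i.e., $y_\alpha \in \mathcal{K}_{[a,b]}$. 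Summing $\|u_n^{(j)}\|^2 = \sum_\alpha \|\chi_\alpha u_n^{(j)}\|^2$ and splitting the sum into $y_\alpha$ near $\mathcal{K}_{[a,b]}$ and $y_\alpha$ far from $\mathcal{K}_{[a,b]}$, the first part tends to $0$ as $n \to \infty$ by the Weyl property on the fixed compact set $\mathcal{K}_{[a,b]}$, while the second part tends to $0$ uniformly in $n$ as $\hbar_j \to 0$ by the localized spectral exclusion; this contradicts $\|u_n^{(j)}\| = 1$. The main obstacle is the quantitative localized spectral exclusion: converting the $\hbar^{5/4}$ global bound of Theorem~\ref{t:spectrum} into a resolvent-type estimate $\|\chi_\alpha u\| \leq C (\mathrm{dist}(\Sigma_{y_\alpha}, \mu_\hbar))^{-1} \|(H_\hbar - \hbar\mu_\hbar) u\| + \text{errors}$, with errors compatible with the $\hbar_j^{1/4}$ margin provided by $\epsilon > c$. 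This requires the full IMS machinery at the scale $\hbar^{1/2}$ together with Taylor expansion of the magnetic potential about each $y_\alpha$ and careful bookkeeping of the off-diagonal and commutator error terms, in the spirit of \cite{higherLL}.
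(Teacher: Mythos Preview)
Your overall strategy---Weyl sequences escaping every compact set, combined with localization and model-operator comparison---is exactly how the paper proceeds. The paper packages the argument as Proposition~\ref{p:estimate} (for $u$ compactly supported in $\Omega_{[a,b]}=\mathbb R^{2n}\setminus\mathcal K_{[a,b]}$ one has $\|(H_\hbar/\hbar-\lambda)u\|\geq(d(\lambda,\Sigma)-C\hbar^{1/4})\|u\|$, proved \`a la Helffer--Mohamed) together with a standard lemma of Mohamed that a lower bound outside a compact set rules out essential spectrum; you are essentially unpacking both steps into one Weyl-sequence argument.

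There is, however, a genuine gap: your localization scale $r\sim\hbar^{1/2}$ is too small. At that scale the IMS error $\hbar^{2}\sum_\alpha|\nabla\chi_\alpha|^{2}$ is of size $\hbar^{2}r^{-2}=\hbar$, and the commutator $[H_\hbar,\chi_\alpha]\sim\hbar r^{-1}\nabla_\hbar$ applied to low-energy states is again $O(\hbar)$. In the rescaled variable $\mu=\lambda/\hbar$ these errors are $O(1)$, while the only lower bound you have on the local gap is $d(\mu_\hbar,\Sigma_{y_\alpha})\geq\epsilon\hbar^{1/4}$ (nothing prevents some Landau level $\Lambda_{\mathbf k}(y_\alpha)$ from approaching $a$ or $b$ as $|y_\alpha|\to\infty$, so the gap need not be bounded below uniformly even far from $\mathcal K_{[a,b]}$). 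The errors swamp the signal and the resolvent-type estimate you identify as the main obstacle fails. The paper uses a partition of unity of size $\hbar^{1/4}$: then $[H_\hbar,\chi_\alpha]$ contributes $O(\hbar^{5/4})$, matching the margin. Moreover, at scale $\hbar^{1/4}$ the naive transverse-gauge Taylor expansion of $A$ still leaves an $O(\hbar)$ operator error; the paper uses local \emph{Darboux} coordinates for $B$, so that the magnetic field is exactly constant on each patch and only a metric error of size $O(r)\cdot H_\hbar=O(\hbar^{5/4})$ survives. With these two corrections your outline goes through; without them it does not close.
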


As in Theorem~\ref{t:spectrum}, the order $\hbar^{5/4}$ doesn't seem to be optimal and, probably, can be improved.

\begin{theorem}[\cite{essential}]\label{t:eigenest}
Under the assumptions of Theorem \ref{t:ess-spectrum}, for any $a_1>a$ and $b_1<b$, there exist $\hbar_0>0$ and $C, c>0$ such that, for any eigenfunction $u_{\hbar}\in C^\infty({\mathbb R}^{2n})\cap L^2({\mathbb R}^{2n})$ of the operator $H_{\hbar}$ with $\hbar\in (0,\hbar_0]$ and with the corresponding eigenvalue $\lambda_{\hbar}\in [\hbar a_1, \hbar b_1]$:
\[
H_{\hbar}u_{\hbar}=\lambda_{\hbar}u_{\hbar},
\]
we have 
\[
\int_{{\mathbb R}^{2n}} e^{2cd (x,\mathcal K_{[a,b]})/{\hbar}^{1/2}}|u_{\hbar}(x)|^2dx \leq C\|u_{\hbar}\|^2,
\]
where $d (x,\mathcal K_{[a,b]})$ is the distance from $x$ to $\mathcal K_{[a,b]}$. 
\end{theorem}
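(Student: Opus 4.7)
The plan is to prove Theorem~\ref{t:eigenest} by a magnetic Agmon-type estimate adapted to the natural semiclassical length scale $\hbar^{1/2}$. Fix a small parameter $c>0$ to be chosen, and construct a Lipschitz weight $\varphi \geq 0$ that vanishes on an $\varepsilon$-neighborhood of $\mathcal K_{[a,b]}$, satisfies $\varphi(x) = c\,(d(x,\mathcal K_{[a,b]}) - \varepsilon)_+$ on a larger shell, and is capped at a large constant $M$; the full theorem then follows from the bounded case by monotone convergence $M \to \infty$. Put $\Phi = \varphi/\hbar^{1/2}$ and $v = e^{\Phi}u_\hbar \in L^2$. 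Conjugation of the magnetic momenta $P_j = (\hbar/i)\partial_j - A_j$ by $e^\Phi$ gives $e^\Phi P_j e^{-\Phi} = P_j + i\hbar\,\partial_j\Phi$, and applying this to the identity $e^\Phi(H_\hbar - \lambda_\hbar)u_\hbar = 0$ paired with $v$ and taking real parts yields the magnetic Agmon identity
\[
\operatorname{Re}\langle (H_\hbar - \lambda_\hbar) v, v\rangle = \hbar^2 \||\nabla\Phi|\, v\|^2 = \hbar\||\nabla\varphi|\, v\|^2 \leq c^2\hbar\|v\|^2.
\]
The underlying operator equation $(H_\hbar - \lambda_\hbar) v = \hbar^2|\nabla\Phi|^2 v - i\hbar\sum_j\{P_j, \partial_j\Phi\} v$, together with the kinetic bound $\|P v\|^2 = O(\hbar)\|v\|^2$ that follows from the identity itself, upgrades to the norm-level estimate
\[
\|(H_\hbar - \lambda_\hbar) v\|^2 \leq C_1 c^2 \hbar^2\|v\|^2.
\]

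Next I record a uniform spectral gap. Compactness of $\mathcal K_{[a,b]}$, combined with $B, V\in C^\infty_b$ and $\inf|B_x|\geq c_0 > 0$, makes $a_j(x)$ and $V(x)$ uniformly continuous, so $x\mapsto \Sigma_x$ varies uniformly continuously in Hausdorff distance on any bounded energy interval; coupled with $\Sigma_x\cap [a,b] = \emptyset$ for $x\notin \mathcal K_{[a,b]}$, this produces $\delta_0 > 0$ and $\varepsilon_0 > 0$ with $\operatorname{dist}(\mu, \Sigma_x) \geq \delta_0$ whenever $\mu \in [a_1,b_1]$ and $d(x, \mathcal K_{[a,b]})\geq \varepsilon_0$. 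Choose $\varepsilon > \varepsilon_0$, cover $\{\varphi > 0\}$ by balls $B_i = B(x_i, \hbar^{1/2-\eta})$ for some fixed $\eta \in (0, 1/4)$, and take a quadratic partition of unity $\chi_0^2 + \sum_i \chi_i^2 \equiv 1$ with $\chi_0$ supported in $\{\varphi = 0\}$ and $\operatorname{supp}\chi_i \subset B_i$. On each $B_i$ a gauge transformation that removes the linear part of $A - A_{x_i}$ at $x_i$, combined with Taylor expansion of $B$ and $V$, yields the local comparison
\[
\|(H_\hbar - \mathcal H^{(x_i)}_\hbar)\chi_i v\| \leq C_2\bigl(\hbar^{3/2-2\eta} + \hbar^{2-4\eta}\bigr)\|v\| = o(\hbar)\|v\|.
\]
Since $(\mathcal H^{(x_i)}_\hbar - \lambda_\hbar)^2 \geq \hbar^2\delta_0^2$ by spectral calculus, this gives $\|(H_\hbar - \lambda_\hbar)\chi_i v\|^2 \geq (\delta_0^2 - o(1))\hbar^2\|\chi_i v\|^2$.

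Summing over $i$ and using $(H_\hbar - \lambda_\hbar)\chi_i v = \chi_i(H_\hbar - \lambda_\hbar) v + [H_\hbar, \chi_i]v$ together with the commutator estimate $\sum_i \|[H_\hbar, \chi_i] v\|^2 = O(\hbar^{2+2\eta})\|v\|^2$ (from $\|\nabla\chi_i\|_\infty = O(\hbar^{-1/2+\eta})$ and $\|P v\|^2 = O(\hbar)\|v\|^2$), we arrive at
\[
\hbar^2\delta_0^2 \sum_i \|\chi_i v\|^2 \leq 2\|(H_\hbar - \lambda_\hbar) v\|^2 + o(\hbar^2)\|v\|^2 \leq (2C_1 c^2 + o(1))\hbar^2\|v\|^2.
\]
Fixing $c$ with $4C_1 c^2 < \delta_0^2$, we deduce for $\hbar$ small that $\sum_i\|\chi_i v\|^2 \leq (3/4)\|v\|^2$, whence $\|\chi_0 v\|^2 \geq (1/4)\|v\|^2$. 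Since $\varphi = 0$ on $\operatorname{supp}\chi_0$, $\chi_0 v = \chi_0 u_\hbar$ and $\|v\|^2 \leq 4\|u_\hbar\|^2$ uniformly in $M$; monotone convergence gives the theorem.

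The principal obstacle is the accuracy required in the local comparison $H_\hbar \leftrightarrow \mathcal H^{(x_i)}_\hbar$: the gap being exploited has size $\hbar\delta_0$, so $\|(H_\hbar - \mathcal H^{(x_i)}_\hbar)\chi_i v\|$ must be truly $o(\hbar)\|v\|$, not merely $O(\hbar)\|v\|$. This forces a balanced choice of ball scale $\hbar^{1/2-\eta}$: $\eta > 0$ is needed so that the IMS derivative error $\hbar^2\|\nabla\chi_i\|_\infty^2$ is subleading, while $\eta < 1/4$ is needed so that the Taylor remainders of $A, B, V$, after pairing with functions whose magnetic kinetic energy is of order $\hbar\|v\|^2$, still give a subleading contribution. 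Making both the per-ball gauge choice and the uniform Hausdorff continuity of $x \mapsto \Sigma_x$ effective in all derivative orders used in the Taylor bounds is the most delicate technical step.
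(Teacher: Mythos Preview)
Your approach is essentially the paper's: both conjugate by an exponential weight at scale $\hbar^{-1/2}$ and rely on a \emph{norm}-level lower bound $\|(H_\hbar-\lambda)w\|\geq c\hbar\|w\|$ for $w$ supported away from $\mathcal K_{[a,b]}$, obtained by comparison with the model operators $\mathcal H^{(x_i)}_\hbar$ on a fine partition. The paper isolates that lower bound as Proposition~\ref{p:estimate} (partition at scale $\hbar^{1/4}$) and then, in Lemma~\ref{l:lower-est-a}, shows it survives passage to the conjugated operator $H_{\hbar,\tau}$ for small $\tau$, using a single macroscopic cutoff $\phi_\hbar$; you instead bound $\|(H_\hbar-\lambda_\hbar)v\|$ from above via the Agmon identity and run the microscopic partition directly on $v$. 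The logic is equivalent.

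Two points to tighten. First, a merely Lipschitz $\varphi$ is not enough: the anticommutator $\{P_j,\partial_j\Phi\}$ involves $\partial_j^2\varphi$, so you need $\varphi$ smoothed at scale $\hbar^{1/2}$ (exactly as the paper does with its regularized distance $\Phi_\hbar$), giving $|\nabla^2\varphi|=O(\hbar^{-1/2})$ and hence a harmless $O(c\hbar)\|v\|$ contribution. Second, the per-ball conclusion $\|(H_\hbar-\lambda_\hbar)\chi_i v\|^2\geq(\delta_0^2-o(1))\hbar^2\|\chi_i v\|^2$ does not follow from your comparison bound, because the right-hand side there carries $\|v\|$, not $\|\chi_i v\|$; for a ball where $\|\chi_i v\|\ll\|v\|$ the inequality fails. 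Skip that step: start from $\hbar^2\delta_0^2\|\chi_i v\|^2\leq\|(\mathcal H^{(x_i)}_\hbar-\lambda_\hbar)\chi_i v\|^2$, sum, and bound $\sum_i\|(H_\hbar-\mathcal H^{(x_i)}_\hbar)\chi_i v\|^2=o(\hbar^2)\|v\|^2$ directly using bounded overlap and the localized kinetic estimate $\sum_i\|\tilde\chi_i Pv\|^2\leq C\|Pv\|^2=O(\hbar)\|v\|^2$. With these two repairs your argument goes through and coincides with the paper's.
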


The exponential decay in Theorem \ref{t:eigenest} is probably non-optimal in some cases. For instance, in the case of two-dimensional magnetic wells, a faster exponential decay of low energy eigenfunctions was proven in \cite{BRV} under analyticity assumptions on the magnetic field. It seems to be optimal in the $C^\infty$-category, see also a discussion on p. 57 in \cite{BRV}.

This study is partly motivated by the spectral theory of Schr\"odinger operators with magnetic walls (see discussion and some references in \cite{essential}). It is highly interesting to study the case when the set $\mathcal K_{[a,b]}$ is non-compact.   In this case, the spectrum of $H_{\hbar}$ in $(\hbar a,\hbar b)$ is, generally, continuous, but one can consider functions in the image of the spectral projection $E_{[\hbar a,\hbar b]}$ of $H_{\hbar}$ associated to $(\hbar a,\hbar b)$. In the quantum Hall effect theory, they are usually called edge states. For the Iwatsuka model proposed in \cite{I85} and its generalizations, exponential localization of edge states was established in \cite{HS15} (see also \cite{RS23} and references therein). In this model, $\mathcal K_{[a,b]}$ is a strip in the plane.

\section{Functions of the Schr\"odinger operator}

Theorem~\ref{t:main} is the analog of Theorem 4.18' in \cite{dai-liu-ma} and Theorem 1 in \cite{Kor18} on the full off-diagonal expansions for the (generalized) Bergman kernels. In the case of Bergman kernels, the remainder estimates in asymptotic expansions are exponentially decreasing for large $|Z-Z^\prime|$. It is clear that, for general functions from the Schwartz class,  these remainder estimates should be only rapidly decreasing. 

The proof combines methods of functional analysis (first of all, the functional calculus based on the Helffer-Sj\"{o}strand formula \cite{HS-LNP345} and norm estimates in suitable Sobolev spaces) with  methods of local index theory developed in \cite{dai-liu-ma,ko-ma-ma,ma-ma:book,ma-ma08,ma-ma15} for the study of the asymptotic behavior of the (generalized) Bergman kernels and originating in the paper by Bismut and Lebeau \cite{BL}. Note that, in contrast to the above mentioned papers \cite{dai-liu-ma,ma-ma:book,ma-ma08}, we do not require that the magnetic field be of full rank. A similar strategy was applied in a close situation by  Savale in \cite{Savale17,Savale18}. 

\subsection{A particular gauge}\label{gauge} 
We start with choosing a particular magnetic potential adapted to an arbitrary point $x_0\in \mathbb R^d$. First, we introduce new coordinates $Z\in \mathbb R^d$ by the formula $x=x_0+Z$. As mentioned above, it is convenient to think of $Z$ as a tangent vector to $\mathbb R^d$ at $x_0$. 

Let us consider the radial vector field $\mathcal R$ in $\mathbb R^d$ given by
\[
\mathcal R=\sum_{j=1}^dZ_j\frac{\partial}{\partial Z_j}. 
\]
Given the magnetic potential $\mathbf A=\sum_{j=1}^d A_j(x_0+Z)dZ_j$, we make a gauge transformation  such that the new magnetic potential 
$$\mathbf A^{(x_0)}=\mathbf A+d\Phi^{(x_0)}=\sum_{j=1}^dA^{(x_0)}_j(x_0+Z)dZ_j,$$ 
satisfies the conditions  
\[
\mathbf A^{(x_0)}(x_0)=0, \quad 
\iota_{\mathcal R}\mathbf A^{(x_0)}(x_0+Z)=\sum_{j=1}^dZ_jA^{(x_0)}_j(x_0+Z)=0.
\]

In a geometric language, we use the trivialization of the trivial line bundle over $\mathbb R^d$ by parallel transport along the rays $t\in \mathbb R_+\mapsto x_0+tZ\in \mathbb R^d$. This gauge is sometimes called the Fock-Schwinger, Poincar\'e or transverse gauge. In local index theory, it was introduced in \cite{ABP73} and called the synchronous framing. Such a gauge was used by Helffer and Mohamed in \cite{HM88}. 
It plays a crucial role in the gauge covariant magnetic perturbation theory elaborated by H. Cornean and G. Nenciu (see, for instance, \cite{CN98,CN00,N02}).

In order to find such a gauge transformation, we have to solve the differential equation 
\[
\mathcal R[\Phi^{(x_0)}](x_0+Z)=-\iota_{\mathcal R}\mathbf A(x_0+Z),
\]
which gives
\[
\Phi^{(x_0)}(x_0+Z)=-\sum_{j=1}^d\int_0^1 A_j(x_0+\tau Z)Z_j\,d\tau.
\]
The potential $\mathbf A^{(x_0)}$ is given by
\[
A^{(x_0)}_j(x_0+Z)=\sum_{k=1}^d\left(\int_0^1 B_{kj}(x_0+\tau Z)\tau\,d\tau\right) Z_k, \quad j=1,\ldots,d.
\]

Expanding both parts of the last formula in Taylor series in $Z$, we get the following well-known fact proved in \cite[Appendix II]{ABP73} (see also \cite[Lemma 1.2.4]{ma-ma:book}): for any $r\geq 0$ and $j=1,\ldots,d$, 
\begin{equation}\label{e:ABP73}
\sum_{|\alpha|=r}\partial^\alpha A^{(x_0)}_j(x_0)\frac{Z^\alpha}{\alpha!}=\frac{1}{r+1}\sum_{|\alpha|=r-1}\sum_{k=1}^d\partial^\alpha B_{kj}(x_0)\,Z_k\frac{Z^\alpha}{\alpha!}.
\end{equation}
In particular, we have
\begin{equation}\label{e:ABP73a}
A^{(x_0)}_j(x_0+Z)=A_{j,x_0}(Z)+\mathcal O(|Z|^2), \quad j=1,\ldots,d,
\end{equation}
 where $A_{j,x_0}(Z)$ is given by \eqref{e:Ajx0}.

Finally, we have the corresponding unitary transformation of operators:
\begin{equation}\label{e:H-equiv}
 H_\hbar=e^{i\Phi^{(x_0)}/\hbar} H^{(x_0)}_\hbar e^{-i\Phi^{(x_0)}/\hbar},
\end{equation}
where
\[
H^{(x_0)}_\hbar=\sum_{j=1}^{d}\left(\frac{\hbar}{i}\frac{\partial}{\partial x_j}-A^{(x_0)}_j(x)\right)^2 +\hbar V(x), \quad \hbar>0.
\]

\subsection{Rescaling and formal expansions}\label{scale}
We use the rescaling introduced in \cite[Section 1.2]{ma-ma08}. 

Denote $t=\sqrt{\hbar}$. For $u\in C^\infty(\mathbb R^d)$, set
\[
S_t u(x)=u\left(\frac{x-x_0}{t}\right), \quad x\in \mathbb R^d.
\]
The operator $S_t$ depends on $x_0\in \mathbb R^d$, but, for simplicity of notation, we will often omit $x_0$.

Consider the rescaling of the operator $H^{(x_0)}_\hbar/\hbar$ defined by 
\begin{equation}\label{scaling}
\mathcal H_t=S^{-1}_t (H^{(x_0)}_\hbar/\hbar) S_t.
\end{equation}
By construction, it is a self-adjoint operator in $L^2(\mathbb R^d)$, and its spectrum coincides with the spectrum of $H_\hbar/\hbar$. 

We get a family $\mathcal H_t$ of second order differential operators on $C^\infty({\mathbb R}^d)$, which depend on $x_0\in \mathbb R^d$, given by the formula
\begin{equation}\label{e:Ht}  
\mathcal H_t=\sum_{j=1}^{d} \nabla^2_{t,e_j}+ V(x_0+tZ),
\end{equation}
where $(e_1,\ldots, e_d)$ is the standard basis in $\mathbb R^d$ and $\nabla_t$ is the rescaled covariant derivative given by
\begin{equation}\label{e:nablat}
\nabla_{t,e_j}=\frac{1}{t} S^{-1}_t \left(\frac{\hbar}{i}\frac{\partial}{\partial x_j}-A_j(x)\right) S_t= \frac{1}{i}\frac{\partial}{\partial Z_j}-\frac{1}{t} A_j(x_0+tZ).
\end{equation}

By \eqref{e:ABP73a}, the operators $\nabla_{t,e_i}$ depend smoothly on $t=\sqrt{\hbar}$ up to $t=0$, 
\begin{equation}\label{e:nabla-exp}
\nabla_{t,e_j}=\nabla^{(x_0)}_{e_j}+O(t),
\end{equation}
where $\nabla^{(x_0)}$ is the covariant derivative associated with the magnetic potential $(A_{j,x_0}(Z))_{1\leq j\leq d}$:
\begin{equation*} 
\nabla^{(x_0)}_{e_j}=\frac{1}{i}\frac{\partial}{\partial Z_j}-A_{j,x_0}(Z), \quad Z\in \mathbb R^d, \quad j=1,\ldots,d.
\end{equation*}

By \eqref{e:ABP73}, we have an asymptotic expansion
\[
\frac{1}{t} A_j(x_0+tZ)=A_{j,x_0}(Z)+\sum_{r=1}^\infty t^{r}A_{j,r,x_0}(Z),
\]
where
\[
A_{j,r,x_0}(Z)=-\frac{1}{r+2}\sum_{|\alpha|=r}\sum_{k=1}^d(\partial^\alpha B_{jk})_{x_0} \,Z_k\frac{Z^\alpha}{\alpha!}
\]
is homogeneous of degree $r+1$.

Expanding the coefficients of the operator $\mathcal H_t$ in Taylor series in $t$, for any $m\in {\mathbb N}$, we get
\begin{equation}\label{e:Ht-formal}
\mathcal H_t=\mathcal H^{(0)}+\sum_{r=1}^m \mathcal H^{(r)}t^r+\mathcal O(t^{m+1}), 
\end{equation}
where there exists $m^\prime\in {\mathbb N}$ so that for every $k\in{\mathbb N}$ and $t\in [0,1]$ the derivatives up to order $k$ of the coefficients of the operator $\mathcal O(t^{m+1})$ are bounded by $Ct^{m+1}(1+|Z|)^{m^\prime}$. 

By \eqref{e:nabla-exp}, the leading term $\mathcal H^{(0)}$ coincides with the operator $\mathcal H^{(x_0)}$ given by \eqref{e:DeltaL0p}:
\[  
\mathcal H^{(0)}=-\sum_{j=1}^{d} (\nabla^{(x_0)}_{e_j})^2+V(x_0)=\mathcal H^{(x_0)}. 
\]
The next terms $\mathcal H^{(r)}, r\geq 1,$ are first order differential operators of the form
\begin{equation}\label{e:Hj}
\mathcal H^{(r)}=\sum_{j=1}^{d}b_{jr}\frac{\partial}{\partial Z_j}+c_r,
\end{equation}
where $$b_{jr}=2A_{j,r,x_0}$$ is a polynomial in $Z$ of degree $r+1$ and $$c_r=\sum_{j=1}^{d}\left(\frac{\partial A_{j,r+1,x_0}}{\partial Z_j} +\sum_{l=1}^{r} A_{j,l,x_0} A_{j,r-l,x_0}\right)+\sum_{|\alpha|=r}\partial^\alpha V(x_0)\frac{Z^\alpha}{\alpha!}$$ is a polynomial in $Z$ of degree $r+2$.  

By \eqref{e:H-equiv} and \eqref{scaling}, we have
\[
\varphi(H_\hbar/\hbar)=e^{i\Phi^{(x_0)}/\hbar} \varphi(H^{(x_0)}_\hbar/\hbar) e^{-i\Phi^{(x_0)}/\hbar}=e^{i\Phi^{(x_0)}/\hbar} S_t \varphi(\mathcal H_t)S^{-1}_t e^{-i\Phi^{(x_0)}/\hbar}.
\]
Accordingly, for the Schwartz kernels, we get
\begin{multline}\label{e:kernel-rel}
K_{\varphi(H_\hbar/\hbar)}(x_0+Z,x_0+Z^\prime)\\ =t^{-d}e^{i(\Phi^{(x_0)}(x_0+Z)-\Phi^{(x_0)}(x_0+Z^\prime))/\hbar} K_{\varphi(\mathcal H_{t,x_0})}\left(t^{-1}Z,t^{-1}Z^\prime\right), \quad Z, Z^\prime \in \mathbb R^{d}. 
\end{multline}

Thus, we have arrived at the family of $t$-dependent smoothing operators $\varphi(\mathcal H_{t,x_0})$ in $\mathbb R^d$ parametrized by $x_0\in \mathbb R^d$. We will study the asymptotic behavior of their Schwartz kernels $K_{\varphi(\mathcal H_{t,x_0})}\left(Z,Z^\prime\right)$ as $t\to 0$.  

\subsection{Norm estimates}\label{norm}
Next, we establish norm estimates for the operators $\varphi(\mathcal H_{t})$ and its derivatives of any order with respect to $t$. 

For $t>0$, set 
\[
\|u\|^2_{t,0}=\|u\|^2_{0}=\int_{\mathbb R^{d}}|u(Z)|^2dZ, \quad u\in C^\infty_c(\mathbb R^{d}),
\]
and, for any $m\in \mathbb N$ and $t>0$,
\[
\|u\|^2_{t,m}=\sum_{\ell=0}^m\sum_{j_1,\ldots,j_\ell=1}^{d}\|\nabla_{t,e_{j_1}}\cdots \nabla_{t,e_{j_\ell}}u\|^2_{t,0},\quad u\in C^\infty_c(\mathbb R^{d}), 
\]
where $\nabla_t$ is the rescaled connection defined by \eqref{e:nablat}. 

Let $\langle\cdot,\cdot\rangle_{t,m}$ denote the inner product on $C^\infty_c(\mathbb R^{d})$ corresponding to $\|\cdot\|^2_{t,m}$. Let $H^m_t$ be the Sobolev space of order $m$ with norm $\|\cdot\|_{t,m}$. For any integer $m<0$, we define the Sobolev space $H^{m}_t$ by duality.  

For a fixed $t>0$, the norm $\|\cdot\|_{t,m}$ is equivalent to the standard Sobolev norm given for $m\in \mathbb N$ by 
\[
\|u\|^2_{H^m(\mathbb R^{d})}=\sum_{|\alpha|\leq m}\|\partial^\alpha u\|^2_{0},\quad u\in C^\infty_c(\mathbb R^{d}).
\]
Therefore, the space $H^{m}_t$ coincides with the usual Sobolev space
$H^m(\mathbb R^{d})$ as a topological vector space. But this norm equivalence is not uniform as $t\to 0$. In order to have control of the Sobolev norms by the norms $\|\cdot\|_{t,m}$, uniform up to $t=0$, we introduce some weighted Sobolev norms with power weights as in \cite{ma-ma08}.

For $\alpha\in {\mathbb Z}^d_+$, we will use the standard notation
\[
Z^\alpha=Z_1^{\alpha_1}Z_2^{\alpha_2}\ldots Z_d^{\alpha_d}, \quad Z\in {\mathbb R}^d. 
\]
For any $t>0$, $m\in \mathbb Z$, and $M\in {\mathbb Z}_+$, we set
\[
\|u\|_{t,m,M}:= \sum_{|\alpha|\leq M}\left\|Z^{\alpha} u\right\|_{t,m}, \quad u\in C^\infty_c(\mathbb R^{d}).
\]
One can show that, for any $m\in {\mathbb N}$, there exists $C>0$ such that 
\begin{equation*}
\|u\|_{H^m(\mathbb R^{d})} \leq C \|u\|_{t,m,m},\quad t\in (0,1], \quad u\in C^\infty(\mathbb R^{d}).
\end{equation*}

Moreover, as in \cite{Kor18}, we introduce an additional family of weight functions parameterized by $W\in \mathbb R^d$. Unlike \cite{Kor18}, where similar weight functions are exponential, here we consider power weights. 

For any $t>0$, $m\in \mathbb Z$, $M,N\in {\mathbb Z}_+$ and $W\in {\mathbb R}^d$, we set
\[
\|u\|_{t,m,M,N,W}= \sum_{|\alpha|\leq M,|\beta|\leq N}\left\|Z^{\alpha}(Z-W)^\beta u\right\|_{t,m}, \quad u\in C^\infty_c(\mathbb R^{d}).
\]
It is clear that
\[
\|u\|_{t,m,M,0,W}=\|u\|_{t,m,M}.
\]

\begin{theorem}\label{est-rem}
For any $r\geq 0$, $m, m^\prime\in {\mathbb Z}$, and $M,N\in \mathbb Z_+$, there exists $C>0$ such that, for any $t\in (0,t_0]$ and $W\in {\mathbb R}^d$, 
\[
\left\|\frac{\partial^r}{\partial t^r}\varphi(\mathcal H_{t}) u\right\|_{t,m,M,N,W}\leq C\|u\|_{t,m^\prime,M+2r,N,W}, \quad u\in C^\infty_c(\mathbb R^{d}).
\] 
\end{theorem}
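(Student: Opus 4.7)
The plan is to derive the estimate via the Helffer--Sj\"ostrand functional calculus combined with uniform weighted resolvent estimates for $\mathcal H_t$, following the strategy of \cite{Kor18} adapted to the present power weights. I would fix an almost analytic extension $\tilde\varphi\in C^\infty(\mathbb C)$ of $\varphi\in\mathcal S(\mathbb R)$ satisfying $|\bar\partial\tilde\varphi(\lambda)|\le C_p|\operatorname{Im}\lambda|^p$ for every $p$, and represent
\begin{equation*}
\varphi(\mathcal H_t)=\frac{1}{\pi}\int_{\mathbb C}\bar\partial\tilde\varphi(\lambda)\,(\lambda-\mathcal H_t)^{-1}\,dL(\lambda).
\end{equation*}
Differentiating $r$ times in $t$ and iterating $\partial_t(\lambda-\mathcal H_t)^{-1}=(\lambda-\mathcal H_t)^{-1}(\partial_t\mathcal H_t)(\lambda-\mathcal H_t)^{-1}$ expresses $\partial_t^r\varphi(\mathcal H_t)$ as a finite sum of integrals against $\bar\partial\tilde\varphi$ of products
\begin{equation*}
(\lambda-\mathcal H_t)^{-1}\mathcal Q^{(1)}(\lambda-\mathcal H_t)^{-1}\mathcal Q^{(2)}\cdots \mathcal Q^{(k)}(\lambda-\mathcal H_t)^{-1},
\end{equation*}
where each $\mathcal Q^{(j)}=\partial_t^{r_j}\mathcal H_t$ with $r_j\ge 1$ and $\sum r_j=r$ is, by \eqref{e:Ht-formal} and \eqref{e:Hj}, a first order differential operator whose polynomial coefficients in $Z$ are of uniformly bounded degree for $t\in[0,t_0]$.

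Next I would prove uniform weighted resolvent estimates: for every $m,M,N$ there exist $K,C$ such that for all $t\in(0,t_0]$, $W\in\mathbb R^d$ and $\lambda\notin\mathbb R$,
\begin{equation*}
\|(\lambda-\mathcal H_t)^{-1}u\|_{t,m,M,N,W}\le C(1+|\lambda|)^K|\operatorname{Im}\lambda|^{-K-1}\|u\|_{t,m-2,M,N,W}.
\end{equation*}
The unweighted $L^2$ case is the spectral bound $\|(\lambda-\mathcal H_t)^{-1}\|\le 1/|\operatorname{Im}\lambda|$; the gain of two Sobolev orders comes from a G\aa rding-type elliptic estimate for $\mathcal H_t$ that is uniform in $t\in[0,t_0]$, which holds because by \eqref{e:nabla-exp} the family $\mathcal H_t$ depends smoothly on $t$ up to $t=0$. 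The weighted versions are obtained by induction on $M$ and $N$, using the commutators $[\nabla_{t,e_j},Z_k]=\tfrac{1}{i}\delta_{jk}$ and $[\nabla_{t,e_j},(Z-W)_k]=\tfrac{1}{i}\delta_{jk}$: commuting $Z^\alpha$ or $(Z-W)^\beta$ past $\mathcal H_t$ produces only first order operators whose polynomial coefficients are of strictly lower degree, absorbable into lower-order weighted norms, and since the commutators are $t$- and $W$-independent constants no $W$-dependent terms are ever generated.

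Combining these, the $r$-fold product is bounded by estimating each $\mathcal Q^{(j)}=\partial_t^{r_j}\mathcal H_t$ as a map between weighted spaces and commuting its polynomial coefficients through the adjacent resolvents into the $Z$-weight; by the explicit form of $b_{jr}$ and $c_r$ in \eqref{e:Hj} together with \eqref{e:ABP73a}, the net weight cost of each $\mathcal Q^{(j)}$ together with one resolvent factor works out to $2r_j$ in the $M$-index, yielding the total loss $2r$. Since $[Z_k,(Z-W)_l]=0$, the factor $(Z-W)^\beta$ stays intact, preserving $N$. Integrating in $\lambda$ against $\bar\partial\tilde\varphi$ absorbs the polynomial factor in $\lambda$, and the smoothing nature of $\varphi(\mathcal H_t)$ (by iterating the resolvent identity one more time) produces the arbitrary difference between $m$ and $m'$. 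The hard part will be the precise bookkeeping of the weight cost, namely showing that the loss is exactly $M+2r$ and not more, together with uniformity in $W\in\mathbb R^d$; both require a careful separation of the $Z^\alpha$- and $(Z-W)^\beta$-weights during each commutator expansion so that no $W$-dependent constants arise and no spurious polynomial degree is introduced.
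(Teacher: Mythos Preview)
Your proposal is correct and follows essentially the same route as the paper: the Helffer--Sj\"ostrand representation \eqref{e:HS}, the iterated resolvent-derivative formula \eqref{diff}, and weighted estimates for $(\lambda-\mathcal H_t)^{-1}$ proved by commuting the monomials $Z^\alpha$ and $(Z-W)^\beta$ through, exactly as you outline. The only device the paper makes explicit that you leave implicit is applying the Helffer--Sj\"ostrand formula to $\psi(\lambda)=\varphi(\lambda)(a-\lambda)^K$ so as to insert an extra factor $(a-\mathcal H_t)^{-K}$ (see \eqref{e:HS1}--\eqref{e:diff-phi}); this is what produces the arbitrary gap between $m$ and $m'$, and is the precise form of what you call ``iterating the resolvent identity one more time.''
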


The proof is based on the Helffer-Sjostrand formula \cite{HS-LNP345}:  
\begin{equation}\label{e:HS}
\varphi(\mathcal H_{t})=-\frac{1}{\pi }
\int_{\mathbb C} \frac{\partial \tilde{\varphi}}{\partial \bar \lambda}(\lambda)(\lambda-\mathcal H_{t})^{-1}d\mu d\nu,
\end{equation}
where $\tilde{\varphi}\in C^\infty_c({\mathbb C})$ is an almost-analytic extension of $\varphi$ satisfying 
\[
\frac{\partial \tilde{\varphi}}{\partial \bar \lambda}(\lambda)=O(|\nu|^\ell),\quad \lambda=\mu+i\nu, \quad \nu\to 0,
\]
for any $\ell\in {\mathbb N}$.

Applying the formula \eqref{e:HS} to the function $\psi(\lambda)=\varphi(\lambda)(a-\lambda)^K$ with any $K\in {\mathbb N}$ and $a<0$, we get 
\begin{equation}\label{e:HS1}
\varphi(\mathcal H_{t})=-\frac{1}{\pi }
\int_{\mathbb C} \frac{\partial \tilde{\varphi}}{\partial \bar \lambda}(\lambda) (a-\lambda)^K (\lambda-\mathcal H_{t})^{-1}(a-\mathcal H_{t})^{-K}d\mu d\nu.
\end{equation}
Now we differentiate \eqref{e:HS1}: 
\begin{equation}\label{e:diff-phi}
\frac{\partial^r}{\partial t^r}\varphi(\mathcal H_{t})=-\frac{1}{\pi }
\int_{\mathbb C} \frac{\partial \tilde{\varphi}}{\partial \bar \lambda}(a-\lambda)^K \frac{\partial^r}{\partial t^r}\left[(\lambda-\mathcal H_{t})^{-1}(a-\mathcal H_{t})^{-K}\right] d\mu d\nu,
\end{equation}
and use the following formula for the derivatives of the resolvent:
\begin{multline}\label{diff}
\frac{\partial^r}{\partial t^r}(\lambda-\mathcal H_{t})^{-1}\\ =\sum_{{\mathbf r}\in I_{r}} \frac{r!}{{\mathbf r}!} (\lambda - \mathcal H_{t})^{-1} \frac{\partial^{r_1}\mathcal H_{t}}{\partial t^{r_1}}(\lambda - \mathcal H_{t})^{-1}\cdots  \frac{\partial^{r_j}\mathcal H_{t}}{\partial t^{r_j}}(\lambda - \mathcal H_{t})^{-1},
\end{multline}
where
\[
I_{r}=\left\{ {\mathbf r}=(r_1,\ldots,r_j) : \sum_{i=1}^jr_i=r, r_i\in \mathbb Z_+ \right\}. 
\]
In the proof of Theorem \ref{est-rem}, we first prove weighted estimate for the resolvent $(\lambda-\mathcal H_{t})^{-1}$ and then apply the above formulas   (see \cite[Section 4]{bochner-trace} for more details).

\subsection{End of the proof of Theorem~\ref{t:main}}

Using the Sobolev embedding theorem, we derive pointwise estimates of the Schwartz kernels of smoothing operators from their mapping properties in Sobolev spaces in a standard way.  
 
\begin{theorem}\label{est-rem-pointwise}
For any $r\geq 0$ and $m\in \mathbb N$, there exists $M>0$ such that, for $N\in {\mathbb N}$, there exist $C>0$ such that, for any $t\in (0,1]$ and $Z,Z^\prime\in {\mathbb R}^{d}$, 
\[
\sup_{|\alpha|+|\alpha^\prime|\leq m}\Bigg|\frac{\partial^{|\alpha|+|\alpha^\prime|}}{\partial Z^\alpha\partial Z^{\prime\alpha^\prime}}\frac{\partial^r}{\partial t^r}K_{\varphi(\mathcal H_{t}) }(Z,Z^\prime)\Bigg| \leq C(1+|Z|+|Z^\prime|)^{M}(1+|Z-Z^\prime|)^{-N}. 
\]
\end{theorem}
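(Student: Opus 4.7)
The plan is to derive Theorem~\ref{est-rem-pointwise} from the weighted Sobolev estimate of Theorem~\ref{est-rem} by a Sobolev embedding argument, exploiting the freedom to choose the weight centre $W\in\mathbb R^d$ in the norm $\|\cdot\|_{t,m,M,N,W}$ so as to capture off-diagonal decay of the kernel. The starting observation is that for a smoothing operator $B$ on $\mathbb R^d$, pointwise bounds on derivatives of its kernel follow from mapping properties of $B$ between Sobolev spaces via the identity
\[
\partial_Z^\alpha\partial_{Z^\prime}^{\alpha^\prime}K_B(Z_0,Z_0^\prime) = (-1)^{|\alpha^\prime|}\bigl(\partial^\alpha B\,\partial^{\alpha^\prime}\delta_{Z_0^\prime}\bigr)(Z_0),
\]
combined with the Sobolev embedding $H^k(\mathbb R^d)\hookrightarrow C^0(\mathbb R^d)$ for $k>d/2$ and the uniform bound $\|\partial^{\alpha^\prime}\delta_{Z_0^\prime}\|_{H^{-|\alpha^\prime|-k}(\mathbb R^d)}\leq C$ independent of $Z_0^\prime$.

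Applying Theorem~\ref{est-rem} with $W=Z_0^\prime$ and combining it with the inequality $\|v\|_{H^m(\mathbb R^d)}\leq C\|v\|_{t,m,m}$ from Section~\ref{norm} (uniformly in $t\in(0,1]$), one reduces bounding the kernel of $A_t:=\partial_t^r\varphi(\mathcal H_t)$ after multiplication by $(Z-Z_0^\prime)^\beta$ (and taking derivatives) to controlling $\|u\|_{t,m^\prime,M_1+2r,N,Z_0^\prime}$ for $u$ equal to (a mollification of) $\partial^{\alpha^\prime}\delta_{Z_0^\prime}$. For such $u$, the weights $(Z-Z_0^\prime)^\delta$ with $|\delta|\leq N$ are harmless --- by duality they merely lower the order of the delta --- while the weights $Z^\gamma$ on the support of $u$ produce at most a factor $(1+|Z_0^\prime|)^{|\gamma|}$. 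On the output side, the weights $(Z-Z_0^\prime)^\beta$ with $|\beta|=N$ evaluated at $Z=Z_0$ yield $|Z_0-Z_0^\prime|^N$. Altogether this gives a bound of the form
\[
|Z_0-Z_0^\prime|^N\bigl|\partial_Z^\alpha\partial_{Z^\prime}^{\alpha^\prime}K_{A_t}(Z_0,Z_0^\prime)\bigr|\leq C(1+|Z_0^\prime|)^{M^\prime}
\]
for some $M^\prime$ depending on $r$, $m$, $N$.

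The statement of Theorem~\ref{est-rem-pointwise} is then obtained by a case-split between $|Z_0-Z_0^\prime|\leq 1$ (where one takes $N=0$ in the above) and $|Z_0-Z_0^\prime|\geq 1$ (where $|Z_0-Z_0^\prime|^N$ is comparable to $(1+|Z_0-Z_0^\prime|)^N$), combined with the triangle inequality $|Z_0|\leq|Z_0^\prime|+|Z_0-Z_0^\prime|$, which converts the one-sided growth $(1+|Z_0^\prime|)^{M^\prime}$ into $(1+|Z_0|+|Z_0^\prime|)^{M^\prime}$ at the cost of additional $|Z_0-Z_0^\prime|$-factors. These extra factors are absorbed into a larger value of $N$, which is permitted because Theorem~\ref{est-rem} holds for all $N\in\mathbb N$, so that a single exponent $M=M(r,m)$ suffices independently of $N$.

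The principal obstacle is the bookkeeping of Sobolev exponents: the output Sobolev order $k+|\alpha|$ for pointwise evaluation, the $+2r$ loss in Theorem~\ref{est-rem}, and the negative input order $-|\alpha^\prime|-k$ needed via duality to handle $\partial^{\alpha^\prime}\delta_{Z_0^\prime}$ must combine so that the final growth exponent $M$ depends only on $r$ and $m$. Uniformity in $t\in(0,1]$ of all comparison constants is automatic from the $\|\cdot\|_{t,m,m}$ bound in Section~\ref{norm}, so once the indices are packaged correctly, the proof reduces to a direct application of Theorem~\ref{est-rem}.
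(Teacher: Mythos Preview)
Your proposal is correct and follows exactly the approach the paper indicates: the paper's entire proof of Theorem~\ref{est-rem-pointwise} is the single sentence ``Using the Sobolev embedding theorem, we derive pointwise estimates of the Schwartz kernels of smoothing operators from their mapping properties in Sobolev spaces in a standard way,'' and what you have written is a careful unpacking of precisely that standard argument, with the choice $W=Z_0^\prime$ in Theorem~\ref{est-rem} being the mechanism that produces the $(1+|Z-Z^\prime|)^{-N}$ decay. One minor remark: the passage from $(1+|Z_0^\prime|)^{M^\prime}$ to $(1+|Z_0|+|Z_0^\prime|)^{M^\prime}$ is immediate (the latter dominates the former), so no extra $|Z_0-Z_0^\prime|$-factors are incurred there and the final absorption step you describe is unnecessary.
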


Observe the following fact, which is an immediate consequence of the mean-value theorem: If $f$ is a differentiable function on the interval $(0,1)$ with values in a Banach space $B$ such that $\sup_{t\in (0,1)} \|f^\prime(t)\|_B<\infty$, then there exists $\lim_{t\to 0}f(t)$. From this fact and Theorem~\ref{est-rem-pointwise}, we infer that, for any $Z,Z^\prime\in {\mathbb R}^{d}$ and $r\geq 0$, there exists the limit 
\begin{equation}\label{e:defFr}
\lim_{t\to 0}\frac{1}{r!} \frac{\partial^r}{\partial t^r}K_{\varphi(\mathcal H_{t})}(Z,Z^\prime) =F_{r}(Z,Z^\prime),
\end{equation}
for some $F_{r}=F_{r,x_0}\in C^\infty(\mathbb R^{d}\times \mathbb R^{d})$.
Consider an operator $F_r : C^\infty_c(\mathbb R^{d})\to C^\infty(\mathbb R^{d})$ defined by the Schwartz kernel $F_r(Z,Z^\prime)$. 
Using the Taylor formula 
\[
\varphi(\mathcal H_{t})-\sum_{r=0}^j{ F_{r} t^r}=\frac{1}{j!}\int_0^t(t-\tau)^j\frac{\partial^{j+1}\varphi(\mathcal H_{t})}{\partial t^{j+1}}(\tau) d\tau, \quad t\in [0,1],  
\]
and Theorem \ref{est-rem-pointwise}, we immediately get the following theorem.

 \begin{theorem}\label{t:thm7.2}
For any $j,m\in \mathbb N$, there exists $M>0$ such that, for any $N\in {\mathbb N}$,  there exists $C>0$ such that for any $t\in (0,1]$ and $Z,Z^\prime\in {\mathbb R}^{d}$, 
\begin{multline*}
\sup_{|\alpha|+|\alpha^\prime|\leq m}\Bigg|\frac{\partial^{|\alpha|+|\alpha^\prime|}}{\partial Z^\alpha\partial Z^{\prime\alpha^\prime}}\Bigg(K_{\varphi(\mathcal H_{t})}(Z,Z^\prime)
-\sum_{r=0}^jF_{r}(Z,Z^\prime)t^r\Bigg)\Bigg|  \\
\leq Ct^{j+1}(1+|Z|+|Z^\prime|)^{M}(1+|Z-Z^\prime|)^{-N}. 
\end{multline*}
\end{theorem}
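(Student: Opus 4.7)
The plan is to deduce Theorem~\ref{t:thm7.2} directly from Theorem~\ref{est-rem-pointwise} via Taylor's formula with integral remainder applied to the smooth map $t \mapsto K_{\varphi(\mathcal H_{t})}(Z,Z')$. First, I would observe that Theorem~\ref{est-rem-pointwise}, applied with $r$ replaced by $r+1$, provides a uniform bound on all $Z, Z'$-derivatives of $\partial^{r+1}_t K_{\varphi(\mathcal H_t)}(Z,Z')$ for $t \in (0,1]$. By the mean-value argument recalled right before Theorem~\ref{t:thm7.2}, this forces each $\partial_t^r K_{\varphi(\mathcal H_t)}(Z,Z')$ to extend continuously to $t = 0$, and by the definition \eqref{e:defFr} the limit is exactly $r!\,F_r(Z,Z')$. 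In particular, the function $t \mapsto K_{\varphi(\mathcal H_t)}(Z,Z')$ is $C^{j+1}$ on $[0,1]$ as a function with values in the appropriate weighted smooth-kernel space.

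Next, I would write the one-variable Taylor formula with integral remainder at $t=0$ for this function, which is exactly the identity already displayed in the paper:
\[
K_{\varphi(\mathcal H_{t})}(Z,Z') - \sum_{r=0}^{j} F_{r}(Z,Z')\,t^{r} = \frac{1}{j!}\int_{0}^{t}(t-\tau)^{j}\,\frac{\partial^{j+1}}{\partial \tau^{j+1}}K_{\varphi(\mathcal H_{\tau})}(Z,Z')\,d\tau.
\]
Because the bounds of Theorem~\ref{est-rem-pointwise} are uniform in $\tau$ and allow differentiation under the integral sign, applying $\partial^{|\alpha|+|\alpha'|}/\partial Z^{\alpha}\partial Z'^{\alpha'}$ with $|\alpha|+|\alpha'|\leq m$ commutes with the $\tau$-integral, so the same identity holds after bringing the $Z, Z'$-derivatives inside the integrand.

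To conclude, I would apply Theorem~\ref{est-rem-pointwise} to the integrand with $r$ taken to be $j+1$: for any prescribed $N\in \mathbb{N}$ there exist $M$ (depending only on $j,m$) and $C$ such that the sup over $|\alpha|+|\alpha'|\leq m$ of the integrand is bounded by $C(1+|Z|+|Z'|)^{M}(1+|Z-Z'|)^{-N}$ uniformly in $\tau\in(0,1]$. Integrating against $(t-\tau)^j/j!$ on $[0,t]$ produces the factor $t^{j+1}/(j+1)!$ and yields the claimed estimate.

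The main obstacle is entirely upstream in Theorem~\ref{est-rem-pointwise} (and in Theorem~\ref{est-rem} underlying it, with its Helffer-Sj\"ostrand based resolvent analysis); once these are granted, the present statement is a short formal consequence. The only subtlety to be careful about is justifying continuity of $\partial_t^r K_{\varphi(\mathcal H_t)}$ at $t=0$ and the identification of its boundary value with $r!\,F_r$, but this is immediate from \eqref{e:defFr} together with the Lipschitz-in-$t$ control supplied by the bound on $\partial_t^{r+1}K_{\varphi(\mathcal H_t)}$.
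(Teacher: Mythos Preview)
Your proposal is correct and follows essentially the same route as the paper: establish the existence of the limits $F_r$ via the mean-value argument and Theorem~\ref{est-rem-pointwise}, apply the Taylor formula with integral remainder (the very identity displayed in the paper), and then bound the integrand using Theorem~\ref{est-rem-pointwise} with $r=j+1$ to produce the factor $t^{j+1}$. The only additional detail you spell out---commuting $Z,Z'$-derivatives with the $\tau$-integral---is implicit in the paper's one-line derivation.
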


By \eqref{e:kernel-rel}, this completes the proof of the asymptotic expansion \eqref{e:main} in Theorem~\ref{t:main}.

\subsection{Computation of the coefficients}\label{s:computation}
Since $\lim_{t\to 0}\frac{\partial^{j}\mathcal H_{t}}{\partial t^{j}}=j!\mathcal H^{(j)}$, { where $\mathcal H^{(j)}$ is the coefficient in the asymptotic expansion \eqref{e:Ht-formal} given by \eqref{e:Hj},} from \eqref{diff}, we get
\begin{multline}\label{diff0}
\lim_{t\to 0}\frac{\partial^r}{\partial t^r}(\lambda-\mathcal H_{t})^{-1}\\ =\sum_{{\mathbf r}\in I_{r}} {r}!(\lambda - \mathcal H^{(0)})^{-1} \mathcal H^{(r_1)}(\lambda - \mathcal H^{(0)})^{-1}\cdots  \mathcal H^{(r_j)}(\lambda - \mathcal H^{(0)})^{-1}.
\end{multline}
By \eqref{e:diff-phi} with $K=0$ and \eqref{e:defFr}, we infer that
\begin{multline}\label{e:Fr-phi}
F_r= -\frac{1}{\pi }\sum_{{\mathbf r}\in I_{r}}{ \int_{\mathbb C}} \frac{\partial \tilde{\varphi}}{\partial \bar \lambda}(\lambda) (\lambda - \mathcal H^{(0)})^{-1}\\ \times \mathcal H^{(r_1)}(\lambda - \mathcal H^{(0)})^{-1}\cdots  \mathcal H^{(r_j)}(\lambda - \mathcal H^{(0)})^{-1} d\mu d\nu.
\end{multline}

For $r=0$, we immediately get
\[
F_0= -\frac{1}{\pi }\int_{\mathbb C} \frac{\partial \tilde{\varphi}}{\partial \bar \lambda}(\lambda) (\lambda - \mathcal H^{(0)})^{-1} d\mu d\nu=\varphi(\mathcal H^{(0)}),
\]
which proves Theorem~\ref{t:leading-coefficient}. 

To compute lower order coefficients and prove the formula \eqref{e:Fr}, we use \eqref{e:Fr-phi} and the technique of creation and annihilation operators (see \cite{bochner-trace} for more details). 

\section{Localization of the spectral projection and eigenfunctions} 
 
\subsection{Discreteness of the spectrum} 
The following proposition plays a crucial role both in the proof of Theorem \ref{t:ess-spectrum} and in the proof of Theorem \ref{t:eigenest}.
 
  \begin{proposition}\label{p:estimate}
 Let $[a,b]\subset \mathbb R$ be a bounded interval and 
 $$
{ \Omega_{[a,b]}:=\mathbb R^{2n}\setminus \mathcal K_{[a,b]}=\{x\in \mathbb R^{2n} : \Sigma_x\cap [a,b]=\emptyset\}.}
 $$ 
 Then there exist $C>0$ and $\hbar_0>0$ such that, for any $\lambda\in (a,b)$, for any $\hbar\in (0,\hbar_0]$ and for any $u\in C^\infty({\mathbb R}^{2n})$ compactly supported in $\Omega_{[a,b]}$, we have 
 \[
 \|(H_{\hbar}/\hbar-\lambda)u\|\geq (d(\lambda,\Sigma)-C\hbar^{1/4})\|u\|,
 \]
 where $d(\lambda,\Sigma)$ stands for the distance from $\lambda$ to $\Sigma$. 
 \end{proposition}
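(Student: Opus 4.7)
The plan is a quantitative comparison via partition of unity combined with the rescaling of Sections~\ref{gauge}--\ref{scale}: on each piece of the partition, $H_\hbar/\hbar$ is compared with the frozen model $\mathcal{H}^{(x_\nu)}$, whose spectrum $\Sigma_{x_\nu}$ is disjoint from $[a,b]$ whenever $x_\nu\in\Omega_{[a,b]}$, so the spectral theorem supplies the lower bound $d(\lambda,\Sigma_{x_\nu})\ge d(\lambda,\Sigma)$.

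\textbf{Step 1 (Localization and rescaling).} Cover $\operatorname{supp}(u)\subset\Omega_{[a,b]}$ by finitely many balls $B(x_\nu,\hbar^{\alpha})\subset\Omega_{[a,b]}$ of bounded multiplicity, for some $\alpha\in(0,1/2)$ to be chosen, and take a subordinate partition of unity $\sum_\nu\chi_\nu^2=1$ satisfying $|\partial^\beta\chi_\nu|\le C_\beta\hbar^{-\alpha|\beta|}$. For each $\nu$, apply the gauge change $A\to A^{(x_\nu)}$ of Section~\ref{gauge} followed by the rescaling $Z=(x-x_\nu)/t$ with $t=\sqrt{\hbar}$ of Section~\ref{scale}, producing $v_\nu=S_t^{-1}e^{-i\Phi^{(x_\nu)}/\hbar}\chi_\nu u$ supported in $|Z|\le R:=\hbar^{\alpha-1/2}$. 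The combined unitary equivalences transform $\|(H_\hbar/\hbar-\lambda)\chi_\nu u\|$ into $t^{d/2}\|(\mathcal{H}_t-\lambda)v_\nu\|$ and $\|\chi_\nu u\|$ into $t^{d/2}\|v_\nu\|$; since $x_\nu\in\Omega_{[a,b]}$ and $\sigma(\mathcal{H}^{(x_\nu)})=\Sigma_{x_\nu}$ in the full-rank case, the spectral theorem yields
\[
\|(\mathcal{H}^{(x_\nu)}-\lambda)v_\nu\|\ge d(\lambda,\Sigma_{x_\nu})\,\|v_\nu\|\ge d(\lambda,\Sigma)\,\|v_\nu\|.
\]

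\textbf{Step 2 (Quantitative comparison with the model).} The perturbation $\mathcal{H}_t-\mathcal{H}^{(x_\nu)}=\sum_{r\ge 1}t^r\mathcal{H}^{(r)}+O(t^{m+1})$ has, by \eqref{e:Hj}, polynomial coefficients of degree $\le r+2$ in $Z$, hence is controlled on $|Z|\le R$ provided we have Sobolev estimates on $v_\nu$. Writing $\Pi_j=(\hbar/i)\partial_{x_j}-A_j$, the key a priori bound is $\sum_j\|\Pi_ju\|^2\le C\hbar\|u\|^2$, which follows from $\langle\sum_j\Pi_j^2u,u\rangle=\langle(H_\hbar-\hbar V)u,u\rangle$ and the standing assumption that $\|(H_\hbar/\hbar-\lambda)u\|$ is already bounded (else the proposition is trivially true). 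After rescaling this gives $\|\nabla_tv_\nu\|\le C\|v_\nu\|$, and by \eqref{e:ABP73a} $\|\nabla^{(x_\nu)}v_\nu\|\le C(1+tR^2)\|v_\nu\|$. Substituting these into $t\mathcal{H}^{(1)}=t\sum_j(b_{j,1}\partial_{Z_j}+c_1)$ and summing the higher-order corrections yields, for the appropriate $\alpha$, $\|(\mathcal{H}_t-\mathcal{H}^{(x_\nu)})v_\nu\|\le C\hbar^{1/4}\|v_\nu\|$, and thus $\|(\mathcal{H}_t-\lambda)v_\nu\|\ge(d(\lambda,\Sigma)-C\hbar^{1/4})\|v_\nu\|$.

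\textbf{Step 3 (Assembly).} From $\sum_\nu\chi_\nu^2=1$,
\[
\|(H_\hbar/\hbar-\lambda)u\|^2=\sum_\nu\|\chi_\nu(H_\hbar/\hbar-\lambda)u\|^2,
\]
and $\chi_\nu(H_\hbar/\hbar-\lambda)u=(H_\hbar/\hbar-\lambda)\chi_\nu u-[\chi_\nu,H_\hbar/\hbar]u$. The commutator is controlled by the explicit formula $[\chi_\nu,H_\hbar]/\hbar=-2i\sum_j(\partial_j\chi_\nu)\Pi_j+\hbar\Delta\chi_\nu$, the bounded multiplicity of the cover, and $\sum_j\|\Pi_ju\|^2\le C\hbar\|u\|^2$, giving $\sum_\nu\|[\chi_\nu,H_\hbar/\hbar]u\|^2\le C\hbar^{1-2\alpha}\|u\|^2$. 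The cross term between the local bound and the commutator error is handled using the cancellation $\sum_\nu\chi_\nu\partial_j\chi_\nu=\tfrac12\partial_j\sum_\nu\chi_\nu^2=0$, which removes its principal part. Inserting the Step~2 lower bound and balancing all errors through the choice of $\alpha$ gives the claimed inequality.

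The principal obstacle is the simultaneous control of two errors of opposite dependence on $\alpha$: the model-operator correction, dominated by terms of the form $tR^{r+2}=\hbar^{(r+2)\alpha-1}$ that improve as $\alpha$ increases (since the rescaled support $R=\hbar^{\alpha-1/2}$ shrinks), versus the commutator error $\hbar^{(1-2\alpha)/2}$, which improves as $\alpha$ decreases (since $|\nabla\chi_\nu|\sim\hbar^{-\alpha}$ shrinks). Attaining the $\hbar^{1/4}$ rate relies crucially both on the a priori Sobolev-type bound $\|\nabla^{(x_\nu)}v_\nu\|=O(\|v_\nu\|)$ for approximate eigenfunctions (so that the $\partial_Z$ in $t\mathcal{H}^{(1)}$ does not cost an extra factor of $R$) and on the cancellation $\sum_\nu\chi_\nu\nabla\chi_\nu=0$ that eliminates the leading contribution of the cross term; loss of either refinement degrades the final exponent.
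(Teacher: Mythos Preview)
Your overall strategy---partition of unity at a small scale, local comparison with the frozen model operator, and reassembly via the identity $\sum_\nu\chi_\nu^2=1$---is exactly the scheme the paper has in mind (following Helffer--Mohamed). The gap is in the choice of local gauge. You invoke the Fock--Schwinger gauge of Section~\ref{gauge}, whereas the paper's proof uses \emph{local Darboux coordinates} for the symplectic form $\mathbf B$. This is not a cosmetic difference: it is what makes the balance of errors land at $\hbar^{1/4}$.

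Concretely, in the Fock--Schwinger gauge the magnetic potential satisfies $A^{(x_\nu)}-A_{x_\nu}=O(|Z|^2)$, so on a ball of radius $\hbar^{\alpha}$ the dominant model-comparison term is $\hbar^{-1}\,\delta A\cdot \Pi$, of size $\hbar^{2\alpha-1}\cdot\hbar^{1/2}=\hbar^{2\alpha-1/2}$ after using $\|\Pi u\|=O(\hbar^{1/2}\|u\|)$; equivalently, in rescaled variables the leading correction $t\mathcal H^{(1)}$ contains $t\,A_{j,1}\nabla^{(x_\nu)}_{e_j}$ with $A_{j,1}$ quadratic, giving $tR^{2}=\hbar^{2\alpha-1/2}$ even \emph{granting} your a priori bound $\|\nabla^{(x_\nu)}v_\nu\|=O(\|v_\nu\|)$. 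Meanwhile the commutator/localization error is $\hbar^{1/2-\alpha}$. The two constraints $2\alpha-\tfrac12\geq\tfrac14$ and $\tfrac12-\alpha\geq\tfrac14$ force $\alpha\geq\tfrac38$ and $\alpha\leq\tfrac14$ simultaneously, which is impossible; the best your gauge can deliver is $\hbar^{1/6}$ (at $\alpha=\tfrac13$), not $\hbar^{1/4}$. The cancellation $\sum_\nu\chi_\nu\nabla\chi_\nu=0$ you invoke affects only the cross term in the reassembly, not the model-comparison error itself, so it does not rescue the exponent.

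In Darboux coordinates the magnetic field becomes \emph{exactly} constant, so the linear gauge is exact and the $O(|Z|^2)$ magnetic-potential error disappears entirely; the remaining discrepancy comes from the metric distortion $g^{ij}(y)=\delta^{ij}+O(|y|)$, which on a ball of radius $\hbar^{1/4}$ contributes $\hbar^{-1}\cdot\hbar^{1/4}\cdot\|\Pi^2 v\|=O(\hbar^{1/4})$ once one uses $\|\Pi_i\Pi_j v\|=O(\hbar\|v\|)$. This balances the localization error at $\alpha=\tfrac14$, yielding the claimed rate. So the missing ingredient is precisely this change of coordinates; with the transverse gauge alone your Step~2 claim ``$\|(\mathcal H_t-\mathcal H^{(x_\nu)})v_\nu\|\le C\hbar^{1/4}\|v_\nu\|$ for the appropriate $\alpha$'' does not hold.
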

   
The proof of Proposition \ref{p:estimate} follows the lines of the proof given in \cite[pp. 106--110]{HM88}. It is based on approximation of the operator $H_{\hbar}$ by the model operator $\mathcal H^{(x_0)}_{\hbar}$ in a sufficiently small neighborhood of an arbitrary point $x_0$ (with the use of a special gauge given by local Darboux coordinates for the magnetic field $B$) and localization with the help of standard partition of unity of size $\hbar^{1/4}$. 
  
 \begin{remark}
 In case $ \Omega_{[a,b]}={\mathbb R}^{2n}$, Proposition \ref{p:estimate} immediately implies  Theorem~\ref{t:spectrum}. This proof of Theorem~\ref{t:spectrum} is different from the original proof given in \cite{higherLL}, which is based on a construction of an approximate inverse for the operator $H_{\hbar}/\hbar-\lambda$ (see also \cite{charles21,FT}). 
 \end{remark}
 
Theorem \ref{t:ess-spectrum} follows immediately from Proposition \ref{p:estimate} and the following lemma, which is a consequence of \cite[Lemma 2.1]{M88}.
 
 \begin{lemma}
Let $\lambda\in\mathbb R$ and $\hbar>0$. Suppose that there exist  $\delta>0$ and a compact $K\subset {\mathbb R}^{2n}$ such that
\begin{equation}
 \|(H_{\hbar}-\lambda)u\|\geq \delta\|u\|
 \end{equation}
 for any $u\in H^2({\mathbb R}^{2n})$ supported in ${\mathbb R}^{2n}\setminus K$. Then $\lambda\not\in \sigma_{\rm ess}(H_{\hbar}).$
 \end{lemma}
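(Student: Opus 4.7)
The plan is to argue by contradiction using a Weyl sequence characterization of the essential spectrum, and to turn it into a sequence supported outside $K$ by a cutoff argument.

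Assume on the contrary that $\lambda\in \sigma_{\rm ess}(H_{\hbar})$. Then there exists a Weyl sequence, i.e.\ a sequence $(u_n)\subset H^2(\mathbb R^{2n})$ with $\|u_n\|=1$, $u_n\rightharpoonup 0$ weakly in $L^2$, and $(H_{\hbar}-\lambda)u_n\to 0$ in $L^2$. Since $H_\hbar$ is elliptic for fixed $\hbar>0$ and $\|H_\hbar u_n\|$ is uniformly bounded, the standard elliptic estimate
\[
\|u\|_{H^2(\mathbb R^{2n})}\leq C\bigl(\|H_\hbar u\|+\|u\|\bigr)
\]
shows that $(u_n)$ is bounded in $H^2(\mathbb R^{2n})$.

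Now I choose a cutoff. Let $\chi\in C^\infty_c(\mathbb R^{2n})$ with $0\leq\chi\leq 1$, $\chi\equiv 1$ on a neighborhood of $K$, and set $v_n=(1-\chi)u_n$. Each $v_n$ is in $H^2(\mathbb R^{2n})$ and is supported in $\mathbb R^{2n}\setminus K$, so it is an admissible test function in the hypothesis of the lemma. The point is that $v_n$ still carries essentially all the mass of $u_n$ and still satisfies $(H_\hbar-\lambda)v_n\to 0$. Indeed, the Rellich–Kondrachov theorem gives a compact embedding $H^2(\Omega)\hookrightarrow H^1(\Omega)$ for any bounded $\Omega$; combined with $u_n\rightharpoonup 0$ in $L^2$ and boundedness in $H^2$, this yields $u_n\to 0$ strongly in $H^1_{\rm loc}(\mathbb R^{2n})$. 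In particular $\|\chi u_n\|\to 0$, so $\|v_n\|\to 1$.

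For the residual, I write
\[
(H_\hbar-\lambda)v_n=(1-\chi)(H_\hbar-\lambda)u_n-[\chi,H_\hbar]u_n.
\]
The first term goes to $0$ in $L^2$ because $(H_\hbar-\lambda)u_n\to 0$ and $|1-\chi|\leq 1$. The commutator $[\chi,H_\hbar]$ is a first-order differential operator with smooth coefficients supported in the compact set $\mathrm{supp}(\nabla\chi)$; applied to $u_n$ it is controlled by $\|u_n\|_{H^1(\mathrm{supp}\nabla\chi)}$, which tends to $0$ by the local $H^1$-convergence established above. Hence $\|(H_\hbar-\lambda)v_n\|\to 0$, so eventually $\|(H_\hbar-\lambda)v_n\|<\frac{\delta}{2}\|v_n\|$, contradicting the assumed lower bound applied to $v_n$.

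The only delicate point is the passage from weak $L^2$-convergence of $u_n$ to strong $H^1_{\rm loc}$-convergence; this is precisely where the uniform $H^2$-bound from ellipticity and Rellich's theorem intervene. Everything else is bookkeeping with a cutoff and a commutator, so I expect this step to be the main (and only genuine) obstacle, and it is handled by the standard elliptic regularity argument above.
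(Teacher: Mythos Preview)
Your argument is essentially correct and follows the standard route via Weyl's criterion plus a cutoff. The paper itself does not prove this lemma but simply cites \cite[Lemma 2.1]{M88}, so your self-contained treatment is a genuine addition rather than a variant of anything in the text.

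One technical point deserves attention. The global elliptic estimate
\[
\|u\|_{H^2(\mathbb R^{2n})}\leq C\bigl(\|H_\hbar u\|+\|u\|\bigr)
\]
is not available under the paper's hypotheses: only $B$ and $V$ are assumed to lie in $C^\infty_b(\mathbb R^{2n})$, not the magnetic potential $A$, and when $A$ is unbounded the domain of $H_\hbar$ need not coincide with $H^2(\mathbb R^{2n})$. Fortunately your proof does not actually need a global $H^2$ bound. You only use $H^2$-control on the compact set $\operatorname{supp}\chi$, and interior elliptic regularity (the coefficients of $H_\hbar$ being smooth, hence locally bounded) gives
\[
\|u_n\|_{H^2(\Omega)}\leq C\bigl(\|H_\hbar u_n\|_{L^2(\Omega')}+\|u_n\|_{L^2(\Omega')}\bigr)
\]
for any bounded $\Omega\Subset\Omega'$, which is all Rellich requires. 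Alternatively, since the paper records that $C^\infty_c(\mathbb R^{2n})$ is a core for $H_\hbar$, you may arrange from the outset that the Weyl sequence lies in $C^\infty_c$; then $v_n=(1-\chi)u_n\in C^\infty_c\subset H^2$ and the hypothesis of the lemma applies to $v_n$ without further discussion. With either fix the remainder of your argument---the commutator estimate and the contradiction---goes through unchanged.
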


\subsection{Eigenfunction estimates}
 Asymptotic localization of eigenfunctions of the magnetic Schr\"odinger operator associated with eigenvalues below the bottom of the essential spectrum usually follows from Agmon type estimates. But when we consider eigenvalues in gaps of the essential spectrum, this method doesn't work, because it is based on the use of quadratic forms. Instead, we use a slight modification of the method developed in \cite{higherLL,bochner-trace} (see also the references therein) to prove exponential localization away the diagonal for Schwartz kernels of functions of the Bochner-Schr\"odinger operator and in \cite{FLRV24} to prove exponential localization of the boundary states of the Robin magnetic  Laplacian away the boundary. The proof of Theorem \ref{t:eigenest} follows the scheme of the proof of Proposition 2.1 in \cite{FLRV24}. The main difference is that, instead of weighted estimates for the resolvent used in \cite{FLRV24}, our proof is based on the norm estimates given by Proposition~\ref{p:estimate}.

We will use some weight functions. 
By standard averaging, one can show that, for any $\hbar>0$, there exists a function $\Phi_{\hbar}\in C^\infty({\mathbb R}^{2n})$, satisfying the following conditions:

(1) we have
\begin{equation}\label{(1.1)}
\vert \Phi_{\hbar}(x) - d (x,\mathcal K_{[a,b]}\vert  < \hbar^{1/2}\;,
\quad x\in {\mathbb R}^{2n}, \quad \hbar>0;
\end{equation}

(2) for any $k>0$, there exists $C_k>0$ such that
\begin{equation}\label{e:chi-p}
\hbar^{(k-1)/2}\left|\nabla^k\Phi_{\hbar}(x)\right|<C_k, \quad x\in {\mathbb R}^{2n}, \quad \hbar>0. 
\end{equation} 
 
Define a family of differential operators on $C^\infty({\mathbb R}^{2n})$ by
\begin{align}\label{e:weight-operator}
H_{\hbar,\tau}:= e^{\tau\Phi_{\hbar}/\hbar^{1/2}} H_{\hbar} e^{-\tau\Phi_{\hbar}/\hbar^{1/2}},\quad \hbar>0, \quad \tau\in {\mathbb R}.
\end{align}
An easy computation gives 
\begin{equation}\label{e:DpaW}
H_{\hbar,\tau}=H_{\hbar}+\tau A_{\hbar}+\tau^2B_{\hbar},
\end{equation}
where 
\begin{equation} \label{e:ApBp}
A_{\hbar}=-2i \hbar^{1/2} d \Phi_{\hbar}\cdot \nabla_{\hbar}+\hbar^{3/2}\Delta \Phi_{\hbar}, \quad B_{\hbar}=-\hbar |d \Phi_{\hbar}|^2.
\end{equation}
Here, for $u \in C^\infty({\mathbb R}^{2n})$, $d\Phi_{\hbar}\cdot\nabla_{\hbar}u \in C^\infty({\mathbb R}^{2n})$ stands for the pointwise inner product of $d\Phi_{\hbar}\in C^\infty({\mathbb R}^{2n},T^*{\mathbb R}^{2n})$ and $\nabla_{\hbar}u\in C^\infty({\mathbb R}^{2n},T^*{\mathbb R}^{2n})$ determined by the Riemannian metric.

\begin{proof}[Proof of Theorem \ref{t:eigenest}]
Suppose that $u_{\hbar}\in C^\infty({\mathbb R}^{2n})\cap L^2({\mathbb R}^{2n})$ is such that 
\[
H_{\hbar}u_{\hbar}=\lambda_{\hbar}u_{\hbar}
\]
with some $\hbar>0$ and $\lambda_{\hbar}\in (a,b)$. 
Then, for $v_{\hbar}=e^{\tau\Phi_{\hbar}/\hbar^{1/2}}u_{\hbar}$, we have 
\begin{equation}\label{e:Hpa-lambda}
H_{\hbar,\tau}v_{\hbar}=\lambda_{\hbar}v_{\hbar}.
\end{equation}

Choose an arbitrary $a_2$ and $b_2$ such that $a_1>a_2>a$ and $b_1<b_2<b$. Let 
\[
\Omega_1=\Omega_{[a_2,b_2]}=\{x\in {\mathbb R}^{2n} : \Sigma_x\cap [a_2, b_2]=\emptyset\}.
\]
This is an open subset of ${\mathbb R}^{2n}$, which contains $\bar\Omega=\overline{\Omega_{[a,b]}}$. Moreover, since $B$ and $V$ are $C^\infty$-bounded, there exists $\epsilon>0$ such that  $\Omega_{1,\epsilon}:=\{x\in \Omega_1 : d(x,\partial \Omega_1)>\epsilon\}$ contains $\bar\Omega$.

Now let $\phi_\hbar \in C^\infty_b({\mathbb R}^{2n})$ be supported in $\Omega_1$ and $\phi_\hbar\equiv 1$ on $\Omega_{1, \hbar^{1/2}}=\{x\in \Omega : d(x,\partial \Omega)>\hbar^{1/2}\}\subset \Omega$:
\[
|\nabla\phi_\hbar|<\hbar^{-1/2}, \quad |\nabla^2\phi_\hbar|<\hbar^{-1}.
\] 
Then
\[
H_{\hbar,\tau}(\phi_\hbar v_{\hbar})=\lambda_{\hbar}\phi_\hbar v_{\hbar}+[H_{\hbar,\tau}, \phi_\hbar] v_{\hbar}.
\]
By Proposition~\ref{p:estimate}, 
there exist $C_0>0$ and $\hbar_0>0$, such that for any $\hbar\in (0,\hbar_0]$, we have 
\begin{equation}\label{e:lower-est}
\|(H_{\hbar}-\lambda_{\hbar})\phi_\hbar v_{\hbar}\|\geq  C_0\hbar\|\phi_\hbar v_{\hbar}\|.
\end{equation}

\begin{lemma}\label{l:lower-est-a}
For any $\tau>0$ small enough,  there exists $C_0>0$, such that for any $\hbar\in (0,\hbar_0]$, we have
\begin{equation}\label{e:lower-est-a}
\|(H_{\hbar,\tau}-\lambda_{\hbar})\phi_\hbar v_{\hbar}\|\geq  C_0\hbar\|\phi_\hbar v_{\hbar}\|.
\end{equation}
\end{lemma}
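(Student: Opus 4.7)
I plan to deduce Lemma~\ref{l:lower-est-a} from \eqref{e:lower-est} by a perturbation argument, treating $H_{\hbar,\tau}-H_{\hbar} = \tau A_{\hbar} + \tau^2 B_{\hbar}$ as a small correction. Setting $w := \phi_{\hbar} v_{\hbar}$ for brevity, the triangle inequality gives
\[
\|(H_{\hbar,\tau}-\lambda_{\hbar})w\| \,\geq\, \|(H_{\hbar}-\lambda_{\hbar})w\| \,-\, \tau\|A_{\hbar} w\| \,-\, \tau^2\|B_{\hbar} w\|,
\]
and the first term is bounded below by $C_0\hbar\|w\|$ thanks to \eqref{e:lower-est}. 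Since $B_{\hbar} = -\hbar|d\Phi_{\hbar}|^2$ and \eqref{e:chi-p} at $k=1$ gives $|d\Phi_{\hbar}|_\infty \leq C$, we have $\|B_{\hbar} w\|\leq C\hbar\|w\|$, so the last term contributes only $C\tau^2\hbar\|w\|$, harmless when $\tau$ is small.

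The heart of the matter is the perturbative bound
\[
\|A_{\hbar} w\| \,\leq\, C_\varepsilon \hbar\|w\| \,+\, \varepsilon\,\|(H_{\hbar}-\lambda_{\hbar})w\| \qquad (\varepsilon>0\ \text{arbitrary}).
\]
To prove it, I first combine \eqref{e:ApBp} with \eqref{e:chi-p} (for $k=1$ and $k=2$) to obtain
$\|A_{\hbar} w\| \leq C\hbar^{1/2}\|\nabla_{\hbar} w\| + C\hbar\|w\|$. Then I appeal to the Bochner-type identity
$\|\nabla_{\hbar} w\|^2 = \langle H_{\hbar} w, w\rangle - \hbar\langle V w, w\rangle$, immediate from $H_{\hbar} = \nabla_{\hbar}^{\ast}\nabla_{\hbar} + \hbar V$, which yields
$\|\nabla_{\hbar} w\|^2 \leq \lambda_{\hbar}\|w\|^2 + \|(H_{\hbar}-\lambda_{\hbar})w\|\,\|w\| + C\hbar\|w\|^2$. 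Since $\lambda_{\hbar} \leq \hbar b_1$, substituting this into the previous inequality and applying Young's inequality to the mixed term $\hbar^{1/2}\|(H_{\hbar}-\lambda_{\hbar})w\|^{1/2}\|w\|^{1/2}$ furnishes the claim.

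Assembling everything with $\varepsilon$ fixed equal to, say, $1$, we get
\[
\|(H_{\hbar,\tau}-\lambda_{\hbar})w\| \,\geq\, \bigl[(1-\tau)C_0 - \tau C_1 - C\tau^2\bigr]\hbar\|w\|,
\]
and the bracket exceeds $C_0/2$ for $\tau > 0$ small enough, yielding \eqref{e:lower-est-a}. I expect the main obstacle to be precisely this absorption step: the principal part of $A_{\hbar}$ is a first-order magnetic differential operator whose coefficient is of size $\hbar^{1/2}$, so the naive bound produces $\hbar^{1/2}\|\nabla_{\hbar} w\|$; bringing this down to size $\hbar\|w\|$ requires an additional factor of $\hbar^{1/2}$ in $\|\nabla_{\hbar} w\|$, which is supplied exactly by the low-energy constraint $\lambda_{\hbar} = O(\hbar)$ through the identity above.
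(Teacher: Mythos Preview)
Your proposal is correct and follows essentially the same argument as the paper: triangle inequality to split off $\tau A_\hbar + \tau^2 B_\hbar$, the bound $\|A_\hbar w\|\le C\hbar^{1/2}\|\nabla_\hbar w\| + C\hbar\|w\|$ from \eqref{e:chi-p}, the identity $\|\nabla_\hbar w\|^2 = \langle (H_\hbar-\hbar V)w,w\rangle$ combined with $\lambda_\hbar=O(\hbar)$ and Young's inequality to absorb the gradient term, and finally taking $\tau$ small. The only cosmetic difference is that the paper chooses the absorption parameter $\epsilon$ so that $C_1\tau\epsilon=\tfrac12$, whereas you fix $\varepsilon=1$ and let $\tau\to 0$; both choices close the estimate.
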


\begin{proof}
We can write
\begin{equation}\label{e:Hpa-lp}
\|(H_{\hbar,\tau}-\lambda_{\hbar})\phi_\hbar v_{\hbar}\|\geq \|(H_{\hbar}-\lambda_{\hbar})\phi_\hbar v_{\hbar}\|-\|(H_{\hbar,\tau}-H_{\hbar})\phi_\hbar v_{\hbar}\|.
\end{equation}
By \eqref{e:DpaW} and \eqref{e:ApBp}, we have
\[
H_{\hbar,\tau}-H_{\hbar}=\tau(-2i \hbar^{1/2} d \Phi_{\hbar}\cdot \nabla_{\hbar}+\hbar^{3/2}\Delta \Phi_{\hbar})-\tau^2\hbar |d \Phi_{\hbar}|^2.
\]
Using \eqref{e:chi-p}, we get 
\begin{multline}\label{e:Hpa-Hp}
\|(H_{\hbar,\tau}-H_{\hbar})\phi_\hbar v_{\hbar} \|\\ \leq C_1 \tau \hbar^{1/2} \|\nabla_{\hbar} \phi_\hbar v_{\hbar}\| +C_2\tau\hbar\|\phi_\hbar v_{\hbar} \|+C_3\tau^2\hbar\|\phi_\hbar v_{\hbar} \|.
\end{multline}

To estimate the first term in the right hand side of \eqref{e:Hpa-Hp}, we proceed as follows:
\begin{align*}
\|\nabla_{\hbar}(\phi_\hbar v_{\hbar})\|^2=& ((\nabla_{\hbar})^2v_{\hbar}, v_{\hbar})=((H_{\hbar}-\hbar V)\phi_\hbar v_{\hbar}, \phi_\hbar v_{\hbar})
\\
= & ((H_{\hbar}-\lambda_{\hbar})\phi_\hbar v_{\hbar}, \phi_\hbar v_{\hbar})+((\lambda_{\hbar}-\hbar V)\phi_\hbar v_{\hbar}, \phi_\hbar v_{\hbar})
\\
\leq & \|(H_{\hbar}-\lambda_{\hbar})\phi_\hbar v_{\hbar}\|\|\phi_\hbar v_{\hbar}\|+C\hbar \|\phi_\hbar v_{\hbar}\|^2
\\
\leq & \epsilon^2 \hbar^{-1}\|(H_{\hbar}-\lambda_{\hbar})\phi_\hbar v_{\hbar}\|^2+(C+\epsilon^{-2})\hbar\|\phi_\hbar v_{\hbar}\|^2
\end{align*}
with an arbitrary $\epsilon>0$ to be chosen later. Eventually, we get
\[
\|\nabla_{\hbar}(\phi_\hbar v_{\hbar})\|\leq \epsilon \hbar^{-1/2}\|(H_{\hbar}-\lambda_{\hbar})\phi_\hbar v_{\hbar}\|+(C+\epsilon^{-1}) \hbar^{1/2} \|\phi_\hbar v_{\hbar}\|. 
\]
Using this estimate, from \eqref{e:Hpa-Hp}, we get
\begin{multline*}
\|(H_{\hbar,\tau}-H_{\hbar})\phi_\hbar v_{\hbar} \|\leq C_1\tau\epsilon \|(H_{\hbar}-\lambda_{\hbar})\phi_\hbar v_{\hbar}\| \\ +(C_2+C_3\epsilon^{-1})\tau \hbar \|\phi_\hbar v_{\hbar}\| +C_4 \tau^2\hbar\|\phi_\hbar v_{\hbar} \|.
\end{multline*}
Choosing $\epsilon$ such that $C_1\tau\epsilon=\frac 12$, we get
\[
\|(H_{\hbar,\tau}-H_{\hbar})\phi_\hbar v_{\hbar} \|\leq \frac 12 \|(H_{\hbar}-\lambda_{\hbar})\phi_\hbar v_{\hbar}\|+C_5\hbar \tau\|\phi_\hbar v_{\hbar} \|+C_6\hbar \tau^2\|\phi_\hbar v_{\hbar} \|.
\]
and, using \eqref{e:lower-est}, from \eqref{e:Hpa-lp}, we get
\begin{multline*}
\|(H_{\hbar,\tau}-\lambda_{\hbar})\phi_\hbar v_{\hbar}\|\geq \frac 12 \|(H_{\hbar}-\lambda_{\hbar})\phi_\hbar v_{\hbar}\|-C_5\hbar \tau\|\phi_\hbar v_{\hbar} \|-C_6\hbar \tau^2\|\phi_\hbar v_{\hbar} \| \\
\geq \frac 12 C_0\hbar \|\phi_\hbar v_{\hbar}\|-C_5\hbar \tau\|\phi_\hbar v_{\hbar} \|-C_6\hbar \tau^2\|\phi_\hbar v_{\hbar} \|.
\end{multline*}
Taking $\tau$ small enough, we complete the proof.
\end{proof}

Let us fix $\tau$ as in Lemma \ref{l:lower-est-a}. By \eqref{e:Hpa-lambda}, we have
\begin{equation}\label{e:H=comm}
(H_{\hbar,\tau}-\lambda_{\hbar})\phi_\hbar v_{\hbar}=[H_{\hbar,\tau},\phi_\hbar]v_{\hbar}.
\end{equation}
By \eqref{e:DpaW} and \eqref{e:ApBp}, we compute
\begin{equation}\label{e:commHp-phi}
[H_{\hbar,\tau}, \phi_\hbar]=(-2i\hbar d\phi_\hbar\cdot \nabla_\hbar+\hbar^2\Delta\phi_\hbar)-2\tau \hbar^{3/2}d \Phi_{\hbar}\cdot d\phi_\hbar.
\end{equation}

Since  $d\phi_\hbar$ and $\Delta\phi_\hbar$ are supported in $\Omega_1\setminus \overline{\Omega_{1,\hbar^{1/2}}}$, we have
\begin{equation}\label{e:est-psi}
\|(d \Phi_{\hbar}\cdot d \phi_\hbar)v_{\hbar}\|\leq C\hbar^{-1/2}\|\psi_\hbar v_{\hbar}\|, \quad \|\Delta\phi_\hbar\, v_{\hbar}\|\leq C\hbar^{-1}\| \psi_\hbar v_{\hbar}\|,
\end{equation}
where $\psi_\hbar \in C^\infty_b({\mathbb R}^{2n})$ is supported in ${\mathbb R}^{2n} \setminus \overline{\Omega_{2\hbar^{1/2}}}$ and $\psi_\hbar\equiv 1$ on ${\mathbb R}^{2n}\setminus \Omega_{\hbar^{1/2}}$, in particular on ${\rm supp}\,d\phi_\hbar\subset \Omega \setminus \overline{\Omega_{\hbar^{1/2}}}$.

Therefore, by \eqref{e:commHp-phi}, \eqref{e:chi-p} and \eqref{e:est-psi}, we get 
\[
\|[H_{\hbar,\tau},\phi_\hbar]v_{\hbar}\|\leq 2\hbar\|d\phi_\hbar\cdot \nabla_\hbar v_{\hbar}\|+C_2\hbar\|\psi_\hbar v_{\hbar}\|.
\]

Next, we show the following estimate:
\[
\|d\phi_\hbar\cdot\nabla_\hbar v_{\hbar}\|\leq C\|\psi_\hbar v_{\hbar}\|. 
\]
Its proof is standard and quite lengthy, so we will omit it. We infer that 
\[
\|[H_{\hbar,\tau}, \phi_\hbar]v_{\hbar}\|\leq C_1\hbar\|\psi_\hbar v_{\hbar}\|.
\]
From this estimate, taking into account \eqref{e:lower-est} and \eqref{e:H=comm}, we get 
\[
\|\phi_\hbar v_{\hbar}\|\leq C_1\|\psi_\hbar v_{\hbar}\|.
\]
Now we proceed as follows: 
\begin{multline*}
\int_{\Omega} e^{2\tau\Phi_{\hbar}(x)/\hbar^{1/2}}|u_{\hbar}(x)|^2dx\leq 
\int_{\Omega_{1,\hbar^{1/2}}} e^{2\tau\Phi_{\hbar}(x)/\hbar^{1/2}}|u_{\hbar}(x)|^2dx\\
=\|v_{\hbar}\|^2_{L^2(\Omega_{\hbar^{1/2}})} \leq \|\phi_\hbar v_{\hbar}\|^2 \leq C^2_1 \|\psi_\hbar v_{\hbar}\|^2 = C^2_1 \|\psi_\hbar v_{\hbar}\|^2_{L^2({\mathbb R}^{2n} \setminus \overline{\Omega_{1,2\hbar^{1/2}}})}\\ \leq C^2_1 \|v_{\hbar}\|^2_{L^2({\mathbb R}^{2n} \setminus \overline{\Omega_{1, 2\hbar^{1/2}}})}=C^2_1 \int_{{\mathbb R}^{2n} \setminus \overline{\Omega_{1,2\hbar^{1/2}}}} e^{2\tau\Phi_{\hbar}(x)/\hbar^{1/2}}|u_{\hbar}(x)|^2dx\\ \leq C^2_1 \|u_{\hbar}\|^2, 
\end{multline*}
since $\Phi_{\hbar}=0$ on ${\mathbb R}^{2n} \setminus \overline{\Omega_{1,2\hbar^{1/2}}}\subset {\mathbb R}^{2n} \setminus\Omega $, that completes the proof of Theorem \ref{t:eigenest}. \end{proof}

 \noindent
{\bf Acknowledgment:} I am grateful to the anonymous referee for his/her valuable comments, which helped me to improve the paper.


\begin{thebibliography}{99}
\bibitem{ABP73}
M. Atiyah, R. Bott and V. K. Patodi, {\it On the heat equation and the index theorem}, Invent. Math. {\bf 19} (1973), 279--330. 

\bibitem{BL} 
J.-M. Bismut and G. Lebeau, {\it Complex immersions and Quillen metrics},
Inst. Hautes \'Etudes Sci.\ Publ.\ Math. {\bf 74} (1991). 
 
 \bibitem{BRV}
Y. G. Bonthonneau, N. Raymond and S. V\~{u} Ng\d{o}c, {\it Exponential localization in 2D pure magnetic wells}, Ark. Mat. {\bf 59} (2021), 53--85. 

\bibitem{charles21}
L. Charles, {\it On the spectrum of non degenerate magnetic Laplacian}, Anal. PDE {\bf 17} (2024),  1907--1952.

\bibitem{charles24}
L. Charles, {\it Landau levels on a compact manifold}, Ann. H. Lebesgue {\bf 7} (2024), 69--121. 

\bibitem{CN98}
H. D. Cornean and Gh. Nenciu, {\it  On eigenfunction decay of two dimensional magnetic
Schr\"odinger operators,} Commun. Math. Phys., {\bf 192} (1998), 671--685 

\bibitem{CN00}
H. D. Cornean and Gh. Nenciu, {\it Two dimensional magnetic Schr\"odinger operators: width
of mini-bands in the tight-binding approximation, } Ann. Henri Poincar\'e {\bf 1} (2000), 203--222

\bibitem{dai-liu-ma}
X.~Dai, K.~Liu and X.~Ma, 
{\it On the asymptotic expansion of {B}ergman
  kernel}, J.\ Differential Geom.\ {\bf 72} (2006), 1--41.

\bibitem{Demailly85}
J.-P. Demailly, {\it Champs magn\'{e}tiques et in\'{e}galit\'{e}s de {M}orse pour la {$d''$}-cohomologie}, Ann. Inst. Fourier (Grenoble) {\bf 35} (1985), 189--229.

\bibitem{Demailly91}
J.-P. Demailly, {\it Holomorphic Morse inequalities.} In {\it Several complex variables and complex geometry, Part 2 (Santa Cruz, CA, 1989)}, 93--114, Proc. Sympos. Pure Math., 52, Part 2, Amer. Math. Soc., Providence, RI, 1991.

\bibitem{FLRV24}
R. Fahs, L. Le Treust, N, Raymond and  S. Vu Ngoc, {\it Boundary states of the Robin magnetic Laplacian}, Doc. Math. {\bf 29} (2024), 1157--1200.

\bibitem{FT}
F. Faure and M. Tsujii, {\it Prequantum transfer operator for symplectic Anosov diffeomorphism.} Ast\'{e}risque No. {\bf 375} (2015).

\bibitem{HK14}
B. Helffer and Yu. A. Kordyukov, {\it Semiclassical spectral asymptotics for a magnetic {S}chr\"{o}dinger operator with non-vanishing magnetic field.} Geometric methods in physics, 259--278, Trends Math., Birkh\"auser/Springer, Cham, 2014.

\bibitem{HK15}
B. Helffer and Yu. A. Kordyukov, {\it Accurate semiclassical spectral asymptotics for a two-dimensional magnetic {S}chr\"{o}dinger operator.} Ann. Henri Poincar\'{e} {\bf 16} (2015), 1651--1688. 

\bibitem{HKRV16}
B. Helffer, Y. Kordyukov, N. Raymond, and S. V\~{u} Ng\d{o}c, {\it Magnetic wells in dimension three,} Anal. PDE {\bf 9} (2016), 1575--1608.

\bibitem{HM88}
B. Helffer and A. Mohamed, {\it Caract\'{e}risation du spectre essentiel de l'op\'{e}rateur de {S}chr\"{o}dinger avec un champ magn\'{e}tique}, Ann. Inst. Fourier (Grenoble) {\bf 38} (1988), 95--112.

\bibitem{HS-LNP345}
B. Helffer and J. Sj\"{o}strand, {\it \'{E}quation de {S}chr\"{o}dinger avec champ magn\'{e}tique et \'{e}quation de {H}arper.} In {\it Schr\"{o}dinger operators ({S}\o nderborg, 1988)}, 118--197, Lecture Notes in Phys., 345, Springer, Berlin, 1989.

\bibitem{HS15}
P. Hislop and E. Soccorsi, {\it Edge states induced by Iwatsuka Hamiltonians with positive magnetic fields,} J. Math. Anal. Appl. {\bf 422} (2015), 594--624.

 \bibitem{I85}
A. Iwatsuka, {\it Examples of absolutely continuous Schr\"{o}dinger operators in magnetic fields,} Publ. Res. Inst. Math. Sci. {\bf 21} (1985), 385--401.

\bibitem{Kor18}
Yu. A. Kordyukov, {\it On asymptotic expansions of generalized Bergman kernels on symplectic manifolds (Russian)}, Algebra i Analiz {\bf 30} (2018), no. 2, 163--187; translation in St. Petersburg Math. J. {\bf 30} (2019), no. 2, 267--283.

\bibitem{higherLL} Yu. A. Kordyukov, {\it Semiclassical spectral analysis of the Bochner-Schr\"odinger operator on symplectic manifolds of bounded geometry,} Anal. Math. Phys. {\bf 12} (2022), no. 1, Paper No. 22, 37 pp.

\bibitem{jst}
Yu. A. Kordyukov, 
{\it Berezin-Toeplitz quantization asssociated with higher Landau levels of the Bochner Laplacian,} J. Spectral Theory {\bf 12} (2022), 143--167.

\bibitem{bg-guant}
Yu. A. Kordyukov, {\it Berezin-Toeplitz quantization on symplectic 
manifolds of bounded geometry (Russian),} Mat. Zametki, {\bf 112} (2022), no 4, 586 -- 600; translation in Math. Notes {\bf 112} (2022), no. 4, 576 -- 587.

\bibitem{UMN-trace} Yu. A. Kordyukov, {\it Trace formula for the magnetic Laplacian at zero energy level,} Uspekhi Mat. Nauk {\bf 77} (2022), no. 6, 159--202; translation in Russian Math. Surv. {\bf 77} (2022), no. 6, 1107--1148. 

\bibitem{bochner-trace}  Yu. A. Kordyukov, {\it Semiclassical asymptotic expansions for functions of the Bochner-Schr\"odinger operator}, Russ. J. Math. Phys. {\bf 30} (2023), 192--208.

\bibitem{essential} Yu. A. Kordyukov, {\it Exponential localization for eigensections of the Bochner- 
Schr\"odinger operator,} Russ. J. Math. Phys. {\bf 31} (2024), 461--476.

\bibitem{ko-ma-ma} Yu. A. Kordyukov, X. Ma and G. Marinescu, 
{\it Generalized Bergman kernels on symplectic manifolds of bounded geometry,} Comm. Partial Differential Equations {\bf 44} (2019), 1037--1071.

\bibitem{ma-ma:book}
X. Ma and G.~Marinescu, {\it Holomorphic Morse inequalities and Bergman kernels}, Progress in Mathematics, 254. Birkh\"auser Verlag, Basel, 2007. 

\bibitem{ma-ma08} 
X. Ma and G.~Marinescu, {\it Generalized Bergman kernels on symplectic manifolds}, Adv. Math. {\bf 217} (2008), 1756--1815.

\bibitem{ma-ma15} X.~Ma and G.~Marinescu, 
{\it Exponential estimate for the asymptotics of Bergman kernels},  
Math. Ann. {\bf 362} (2015),  1327--1347.

\bibitem{M88}
A. Mohamed, {\it Quelques remarques sur le spectre de l'op\'{e}rateur de {S}chr\"{o}dinger avec un champ magn\'{e}tique}, Comm. Partial Differential Equations {\bf 13} (1988), 1415--1430.

\bibitem{M}
L. Morin,  {\it A semiclassical Birkhoff normal form for symplectic magnetic wells}, J. Spectr. Theory {\bf 12} (2022), 459--496.

\bibitem{M24}
L. Morin,  {\it A semiclassical Birkhoff normal form for constant-rank magnetic fields}, Anal. PDE {\bf 17} (2024), 1593--1632.

\bibitem{N02}
G. Nenciu, {\it  On asymptotic perturbation theory for quantum mechanics: almost invariant subspaces and gauge invariant magnetic perturbation theory. } J. Math. Phys. {\bf 43} (2002), 1273--1298.

\bibitem{RS23}
N. Raymond and E. Soccorsi, {\it Magnetic quantum currents in the presence of a Neumann wall,} J. Math. Phys. {\bf 64} (2023), Paper No. 073506, 18 pp.

\bibitem{RV}
N. Raymond and S. V\~{u} Ng\d{o}c, {\it Geometry and spectrum in 2D magnetic wells}, Ann. Inst. Fourier (Grenoble) {\bf 65} (2015), 137--169.  

\bibitem{Savale17}
N. Savale, {\it Koszul complexes, Birkhoff normal form and the magnetic Dirac operator}, Anal. PDE {\bf 10} (2017), 1793--1844.

\bibitem{Savale18}
N. Savale, {\it A Gutzwiller type trace formula for the magnetic Dirac operator}, Geom. Funct. Anal. {\bf 28} (2018), 1420--1486.
\end{thebibliography}
\end{document}